\numberwithin{equation}{section}
\newtheorem*{thm*}{Theorem A}
\newtheorem{thm}{Theorem}[section]
\newtheorem{lemma}{Lemma}[section]
\newtheorem{remark}{Remark}[section]
\newtheorem*{acknow}{Acknowlegement}
\begin{document}

\def\d{ \partial_{x_j} }
\def\Na{{\mathbb{N}}}

\def\Z{{\mathbb{Z}}}

\def\IR{{\mathbb{R}}}

\newcommand{\E}[0]{ \varepsilon}

\newcommand{\la}[0]{ \lambda}

\newcommand{\s}[0]{ \mathcal{S}}

\newcommand{\AO}[1]{\| #1 \| }

\newcommand{\BO}[2]{ \left( #1 , #2 \right) }

\newcommand{\CO}[2]{ \left\langle #1 , #2 \right\rangle}

\newcommand{\R}[0]{ \IR\cup \{\infty \} }

\newcommand{\co}[1]{ #1^{\prime}}

\newcommand{\p}[0]{ p^{\prime}}

\newcommand{\m}[1]{   \mathcal{ #1 }}

\newcommand{ \W}[0]{ \mathcal{W}}

%  norm of H
\newcommand{ \A}[1]{ \left\| #1 \right\|_H }

% inner product H
\newcommand{\B}[2]{ \left( #1 , #2 \right)_H }

% H^* , H pairing
\newcommand{\C}[2]{ \left\langle #1 , #2 \right\rangle_{  H^* , H } }

 \newcommand{\HON}[1]{ \| #1 \|_{ H^1} }

% Omega    \Om
\newcommand{ \Om }{ \Omega}

% \partial Omega      \pOm
\newcommand{ \pOm}{\partial \Omega}

%   D(\Omega)   \D
\newcommand{\D}{ \mathcal{D} \left( \Omega \right)}

% D'( Omega)        \DP
\newcommand{\DP}{ \mathcal{D}^{\prime} \left( \Omega \right)  }

% D' pairing
\newcommand{\DPP}[2]{   \left\langle #1 , #2 \right\rangle_{  \mathcal{D}^{\prime}, \mathcal{D} }}

% (H^1)^* , H^1    (( pairing ))      \PHH
\newcommand{\PHH}[2]{    \left\langle #1 , #2 \right\rangle_{    \left(H^1 \right)^*  ,  H^1   }    }

%  H^{-1} , H_0^1  (( pairing ))   \PHO
\newcommand{\PHO}[2]{  \left\langle #1 , #2 \right\rangle_{  H^{-1}  , H_0^1  }}

 %  H^1(\Omega)     \HO
 \newcommand{\HO}{ H^1 \left( \Omega \right)}

%  H_0^1( \Omega)       \HOO
\newcommand{\HOO}{ H_0^1 \left( \Omega \right) }

% C_c^\infty(omega)
\newcommand{\CC}{C_c^\infty\left(\Omega \right) }

%H_0^1(Omega)  norm
\newcommand{\N}[1]{ \left\| #1\right\|_{ H_0^1  }  }

%H_0^1(Omega)   innerproduct
\newcommand{\IN}[2]{ \left(#1,#2\right)_{  H_0^1} }

% H^1(\Omega) inner product
\newcommand{\INI}[2]{ \left( #1 ,#2 \right)_ { H^1}}

% (H^1(\Omega))^*
\newcommand{\HH}{   H^1 \left( \Omega \right)^* }

% ( H^{-1}(\Omega))
\newcommand{\HL}{ H^{-1} \left( \Omega \right) }

\newcommand{\HS}[1]{ \| #1 \|_{H^*}}

\newcommand{\HSI}[2]{ \left( #1 , #2 \right)_{ H^*}}

\newcommand{\WO}{ W_0^{1,p}}
\newcommand{\w}[1]{ \| #1 \|_{W_0^{1,p}}}

\newcommand{\ww}{(W_0^{1,p})^*}

\newcommand{\Ov}{ \overline{\Omega}}

\title{Regularity of the extremal solutions associated to elliptic systems}

\author{Craig Cowan}
\author{Mostafa Fazly}

\address{Department of Mathematical Sciences, University of Alabama in Huntsville, 258A Shelby Center, Huntsville, AL 35899.}

\email{ctc0013@uah.edu}

\address{Department of Mathematical and Statistical Sciences, University of Alberta, Edmonton, Alberta, Canada T6G 2G1}

\email{fazly@ualberta.ca}

\thanks{The second author  is pleased to acknowledge the support of a University of Alberta start-up grant RES0019810.}

\maketitle

\vspace{3mm}

\begin{abstract}  We examine the two elliptic systems given by
\[ (G)_{\lambda,\gamma} \quad    -\Delta u    = \lambda f'(u) g(v),  \quad -\Delta v = \gamma f(u) g'(v)  \quad \mbox{ in $ \Omega$},  \]  and
\[ (H)_{\lambda,\gamma} \quad    -\Delta u    = \lambda f(u) g'(v),  \quad -\Delta v = \gamma f'(u) g(v)  \quad \mbox{ in $ \Omega$},\] with zero Dirichlet boundary conditions and where    $ \lambda,\gamma$ are positive parameters.    We show that for general nonlinearities $f$ and $g$ the extremal solutions associated with $ (G)_{\lambda,\gamma}$ are bounded, provided $ \Omega$ is a convex domain in $ \IR^N$ where $ N \le 3$.   In the case of a radial domain, we show the extremal solutions are bounded provided  $ N <10$.    The extremal solutions associated with $ (H)_{\lambda,\gamma}$ are bounded in the case where $ f$ is a general nonlinearity, $ g(v)=(v+1)^q$ for $ 1 <q<\infty$ and when $ \Omega$ is a bounded convex domain in $ \IR^N$ for $ N \le 3$.  Certain regularity results are also obtained in higher dimensions for $ (G)_{\lambda,\gamma}$ and $(H)_{\lambda,\gamma}$  for the case of explicit nonlinearities of the form $ f(u)=(u+1)^p$ and $ g(v)=(v+1)^q$.

\end{abstract}

\noindent
{\it \footnotesize 2010 Mathematics Subject Classification}. {\scriptsize }\\
{\it \footnotesize Key words: Elliptic systems, extremal solutions, stable solutions, regularity of solutions, radial solutions}. {\scriptsize }

\section{Introduction}

We examine the following  systems:

 \begin{eqnarray*}
(G)_{\lambda,\gamma}\qquad  \left\{ \begin{array}{lcl}
\hfill   -\Delta u    &=& \lambda f'(u) g(v) \qquad \Omega  \\
\hfill -\Delta v &=& \gamma f(u) g'(v)  \qquad \Omega,  \\
\hfill u &=& v =0 \qquad \qquad \pOm
\end{array}\right.
  \end{eqnarray*}   and

 \begin{eqnarray*}
(H)_{\lambda,\gamma}\qquad  \left\{ \begin{array}{lcl}
\hfill   -\Delta u    &=& \lambda f(u) g'(v) \qquad \Omega  \\
\hfill -\Delta v &=& \gamma f'(u) g(v)  \qquad \Omega ,  \\
\hfill u &=& v =0 \qquad \qquad \pOm
\end{array}\right.
  \end{eqnarray*}  where $\Omega$ is a bounded domain in $\IR^N$ and $ \lambda, \gamma >0$ are positive parameters.  The nonlinearities $f$ and $g$ will satisfy various properties but will always at least satisfy
 \[ (R) \;\;  \mbox{$f$ is smooth, increasing and convex with $ f(0)=1 $ and $ f$ superlinear at $ \infty$.}\]
  We begin by recalling the scalar analog of the above systems.  Given a  nonlinearity $ f$ which satisfies (R), the following equation
 \begin{eqnarray*}
\hbox{$(Q)_{\lambda}$}\hskip 50pt \left\{ \begin{array}{lcl}
\hfill   -\Delta u  &=& \lambda f(u) \qquad \Omega \\
\hfill u&=& 0 \qquad\qquad \pOm
\end{array}\right.
  \end{eqnarray*}
  is  now quite well understood whenever $ \Omega$ is a bounded smooth domain in $ \IR^N$. See, for instance, \cite{BV,Cabre,CC, CR, MP,Nedev,bcmr}. We now list the  properties one comes to expect when studying $(Q)_{\lambda}$.   It is well known that there exists a critical parameter  $ \lambda^* \in (0,\infty)$, called the extremal parameter,  such that for all $ 0<\lambda < \lambda^*$ there exists a smooth, minimal solution $u_\lambda$ of $(Q)_\lambda$.  Here the minimal solution means in the pointwise sense.  In addition for each $ x \in \Omega$ the map $ \lambda \mapsto u_\lambda(x)$ is increasing in $ (0,\lambda^*)$.   This allows one to define the pointwise limit $ u^*(x):= \lim_{\lambda \nearrow \lambda^*} u_\lambda(x)$  which can be shown to be a weak solution, in a suitably defined sense, of $(Q)_{\lambda^*}$.  For this reason $ u^*$ is called the extremal solution.
   It is also known that for $ \lambda >\lambda^*$ there are no weak solutions of $(Q)_\lambda$.  Also one can show the minimal solution $ u_\lambda$  is a semi-stable solution of $(Q)_\lambda$  in the sense that
   \[ \int_\Omega \lambda  f'(u_\lambda) \psi^2 \le \int_\Omega | \nabla \psi|^2, \qquad \forall \; \psi \in H_0^1(\Omega).\]
   A question that has attracted a lot of attention is the regularity of the extremal solution.  It is known that the extremal solution can be a classical solution or it can be a singular weak solution.   We now list some results in this direction:
 \begin{itemize}
   \item (\cite{Nedev})  $u^*$ is bounded if $f$ satisfies (R) and $ N \le 3$.

   \item (\cite{Cabre}) $u^*$ is bounded if $f$ satisfies (R) (can drop the convexity assumption) and $ \Omega$ a convex domain in $ \IR^4$.

   \item (\cite{CC}) $u^*$ is bounded if $ \Omega$ is a radial domain in $ \IR^N$ with $ N <10$ and $ f$ satisfies (R) (can drop the convexity assumption).

   \end{itemize}

 It is precisely these type of results which we are interested in extending to the case of systems.  Before we can discuss the regularity of the extremal solutions associated with $(G)_{\lambda,\gamma}$ and $ (H)_{\lambda,\gamma}$ we need to introduce some notation.

Under various conditions on $f$ and $g$ the above systems fit into the general framework of developed in \cite{Mont}, who examined  a  generalization of
\begin{eqnarray*}
(P)_{\lambda,\gamma}\qquad  \left\{ \begin{array}{lcl}
\hfill   -\Delta u    &=& \lambda F(u,v) \qquad \Omega  \\
\hfill -\Delta v &=& \gamma G(u,v)   \qquad \Omega ,  \\
\hfill u &=& v =0 \qquad \qquad \pOm.
\end{array}\right.
  \end{eqnarray*}
The following results are all taken from \cite{Mont}. Let   $ \mathcal{Q}=\{ (\lambda,\gamma): \lambda, \gamma >0 \}$ and we define
\[ \mathcal{U}:= \left\{ (\lambda,\gamma) \in \mathcal{Q}: \mbox{ there exists a smooth solution $(u,v)$ of $(P)_{\lambda,\gamma}$} \right\}.\]  Firstly we assume that $F(0,0),G(0,0)>0$.
 A simple argument shows that if  $F$ is superlinear at $ u=\infty$, uniformly in $v$,  then the set of $ \lambda$ in $\mathcal{U}$ is bounded.  Similarly we assume that $ G$ is superlinear at $ v=\infty$, uniformly in $u$ and hence we get $ \mathcal{U}$ is bounded.
 We also assume that $F,G$ are increasing in each variable.   This allows the use of a sub/supersolution approach and one easily sees that if $ (\lambda,\gamma) \in \mathcal{U}$ then so is $ (0,\lambda] \times (0,\gamma]$.   One also sees that $ \mathcal{U}$ is nonempty.

We now define
 $ \Upsilon:= \partial \mathcal{U} \cap \mathcal{Q}$, which plays the role of the extremal parameter $ \lambda^*$.      Various properties of $ \Upsilon$ are known, see \cite{Mont}.     Given $ (\lambda^*,\gamma^*) \in \Upsilon$ set $ \sigma:= \frac{\gamma^*}{\lambda^*} \in (0,\infty)$ and define
 \[ \Gamma_\sigma:=\{ (\lambda, \lambda \sigma):  \frac{\lambda^*}{2} < \lambda < \lambda^*\}.\]   We let $ (u_\lambda,v_\lambda)$ denote the minimal solution $(P)_{\lambda, \sigma \lambda}$ for $ \frac{\lambda^*}{2} < \lambda < \lambda^*$.  One easily sees that for each $ x \in \Omega$ that $u_\lambda(x), v_\lambda(x)$ are increasing in $ \lambda$ and hence we define
 \[ u^*(x):= \lim_{\lambda \nearrow \lambda^*} u_\lambda(x), \quad  v^*(x):= \lim_{\lambda \nearrow \lambda^*} v_\lambda(x),\]  and we call $(u^*,v^*)$ the extremal solution associated with $ (\lambda^*,\gamma^*) \in \Upsilon$.
 Under some very  minor growth assumptions on $F$ and $G$ one can show that $(u^*,v^*)$ is a weak solution of $(P)_{\lambda^*,\gamma^*}$.

 We now come to the issue of stability.
\begin{thm*} \label{stable} \cite{Mont}   Let $ (\lambda,\gamma) \in \mathcal{U} $ and let $ (u,v)$ denote the minimal solution of $(P)_{\lambda,\gamma}$.  Then $(u,v)$ is semi-stable in the sense that there is some smooth $ 0 < \zeta,\chi \in H_0^1(\Omega)$ and $  0 \le \eta $  such that
\begin{equation} \label{sta}
 -\Delta \zeta = \lambda  F_u(u,v) \zeta + \lambda F_v(u,v) \chi + \eta \zeta, \qquad -\Delta \chi = \gamma G_u(u,v) \zeta + \gamma G_v(u,v) \chi + \eta \chi, \qquad \Omega.
 \end{equation}
\end{thm*}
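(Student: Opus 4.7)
The plan is to construct the triple $(\zeta,\chi,\eta)$ by applying a Krein--Rutman argument to the linearised system at the minimal solution $(u,v)$ and then to verify the sign condition $\eta\ge 0$ by contradiction with minimality. Observe that the linear operator
\[
\mathcal{L}\begin{pmatrix}\zeta\\ \chi\end{pmatrix}
:= \begin{pmatrix}-\Delta \zeta - \lambda F_u(u,v)\zeta - \lambda F_v(u,v)\chi\\
-\Delta \chi - \gamma G_u(u,v)\zeta - \gamma G_v(u,v)\chi\end{pmatrix}
\]
is weakly-coupled and cooperative, since $F,G$ are increasing in each variable and hence $F_v,G_u\ge 0$. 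Adding a sufficiently large constant and inverting produces a compact, strongly order-preserving operator on the positive cone of $C^0(\Ov)\times C^0(\Ov)$; the Krein--Rutman theorem then delivers a simple principal eigenpair with $\zeta,\chi>0$ in $\Omega$, Hopf-positive on $\pOm$, and a corresponding eigenvalue $\eta\in\IR$. Elliptic bootstrapping yields the smoothness of $\zeta,\chi$, since $u,v$ are smooth and therefore so are the coefficients $F_u(u,v),F_v(u,v),G_u(u,v),G_v(u,v)$.

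The remaining task is to show $\eta\ge 0$, which I would handle by contradiction. Suppose $\eta<0$ and, for $\delta>0$ small, set $w=u-\delta\zeta$ and $z=v-\delta\chi$. By the Hopf lemma applied to both $(u,v)$ and $(\zeta,\chi)$, the pair $(w,z)$ is non-negative on $\Ov$, vanishes on $\pOm$, and lies strictly below $(u,v)$ in $\Omega$. A second-order Taylor expansion of $F$ and $G$ around $(u,v)$ in the direction $(-\delta\zeta,-\delta\chi)$, combined with the eigenvalue equations, yields
\[
-\Delta w - \lambda F(w,z) \;=\; -\delta\eta\,\zeta + O(\delta^2),\qquad
-\Delta z - \gamma G(w,z) \;=\; -\delta\eta\,\chi + O(\delta^2),
\]
where the remainders are bounded pointwise by $C\delta^2(\zeta^2+\chi^2)$. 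Since $-\delta\eta\,\zeta$ and $-\delta\eta\,\chi$ decay only linearly near $\pOm$ (by Hopf), whereas the remainders decay quadratically, for sufficiently small $\delta$ the pair $(w,z)$ is a strict classical supersolution of $(P)_{\lambda,\gamma}$ lying strictly below the minimal solution. The standard monotone iteration between the subsolution $(0,0)$ and the supersolution $(w,z)$ then produces a solution of $(P)_{\lambda,\gamma}$ pointwise bounded by $(w,z)<(u,v)$, contradicting minimality; hence $\eta\ge 0$.

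The main technical obstacle is precisely the boundary behaviour in the second step: one must ensure that the linear perturbation $-\delta\eta\,(\zeta,\chi)$ dominates the $O(\delta^2)$ Taylor remainder uniformly up to $\pOm$, so that the constructed $(w,z)$ is a genuine strict supersolution across the whole domain rather than merely in the interior. This is achieved by combining the Hopf lemma (linear decay of $\zeta,\chi,u,v$ in the distance to $\pOm$) with the $C^2$-regularity of $F,G$, which together give the decisive quadratic-versus-linear decay comparison. With that in place, both the Krein--Rutman construction of the first step and the monotone iteration of the second step are routine, and neither requires convexity of $F$ or $G$ beyond the natural smoothness hypotheses.
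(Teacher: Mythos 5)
The paper does not actually prove this statement: Theorem A is quoted verbatim from \cite{Mont}, so there is no internal proof to compare against. Your reconstruction is, however, essentially the standard argument (and close in spirit to what is done in \cite{Mont}): a principal eigenpair for the cooperative linearized system via Krein--Rutman, followed by the observation that a negative principal eigenvalue would let you push the minimal solution down to a smaller supersolution, contradicting minimality. The quantitative core of your second step is right: with $-\Delta w-\lambda F(w,z)=-\delta\eta\,\zeta-\lambda R$ and $|R|\le C\delta^{2}(\zeta^{2}+\chi^{2})$, the Hopf bounds $c\,d(x)\le\zeta,\chi\le C\,d(x)$ give $\zeta^{2}+\chi^{2}\le C\zeta$ and $\le C\chi$, so both supersolution inequalities hold for small $\delta$, and the same Hopf bounds for $(u,v)$ give $w,z\ge0$. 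Two remarks. First, your Krein--Rutman step silently needs more than $F_v,G_u\ge0$: to get a \emph{strongly} order-preserving resolvent, hence simplicity and strict positivity of \emph{both} components $\zeta,\chi$, you need the system to be fully coupled (irreducible), i.e.\ $F_v$ and $G_u$ not identically zero; if the coupling vanishes the principal eigenfunction of the matrix operator may have a zero component. In the applications of this paper ($F_v=f'(u)g'(v)$, $G_u=f'(u)g'(v)$ or $f''(u)g(v)$, positive under the standing hypotheses) this is harmless, but it should be stated. Second, the final monotone iteration is more than you need: the minimal solution, being the limit of the iteration started at $(0,0)$, lies below \emph{every} nonnegative supersolution, so $u\le w=u-\delta\zeta$ is already the contradiction, without constructing a new solution. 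Neither point is a genuine gap; the proposal is sound.
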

In this paper we prove that the extremal solution of $(G)_{\lambda^*,\gamma^*}$ with general nonlinearities, either on a general domain and lower dimensions or on a radial domain and higher dimensions are regular. Moreover, for explicit nonlinearities we prove regularity on a general domain in higher dimensions.

The following stability inequalities play a key role in this paper and we shall refer to them many times through proofs. We mention that in \cite{fg} the De Giorgi type results and Liouville theorems have been proved for a much more general gradient system and they obtained a stability inequality which reduces to (\ref{gra}) in the particular case we are examining. Note that some of our results will hold for a general gradient system that is examined in \cite{fg}.

\begin{lemma} \label{stabb} For any $\phi,\psi \in H_0^1(\Omega)$ the following inequalities hold.
 \begin{enumerate} \item Let $(u,v)$ denote a semi-stable solution of $(G)_{\lambda,\gamma}$ in the sense of (\ref{sta}).  Then
\begin{equation} \label{gra}
\int f''(u) g(v) \phi^2 + \int f(u) g''(v) \psi^2 +2 \int f'(u) g'(v) \phi \psi \le \frac{1}{\lambda} \int | \nabla \phi|^2 + \frac{1}{\gamma} \int | \nabla \psi|^2.
\end{equation}

\item Let $(u,v)$ denote a semi-stable solution of $(H)_{\lambda,\gamma}$ in the sense of (\ref{sta}). Then
\begin{equation} \label{twist}
 \int f'(u) g'(v) (\phi^2+ \psi^2)+ 2 \int \sqrt{f f'' g g''}  \phi \psi \le \frac{1}{\lambda} \int | \nabla \phi|^2 + \frac{1}{\gamma} \int | \nabla \psi|^2.
\end{equation}

\end{enumerate}

\end{lemma}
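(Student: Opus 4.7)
The plan is to run a Picone-type argument against the positive pair $(\zeta,\chi)$ and $\eta \ge 0$ supplied by Theorem A. The underlying one-function bound is the standard Picone inequality
\[
\int \frac{-\Delta \zeta}{\zeta}\,\phi^2 \;\le\; \int |\nabla \phi|^2 \qquad (\phi \in H_0^1(\Omega),\ \zeta>0),
\]
obtained by expanding $\int \nabla\zeta\cdot\nabla(\phi^2/\zeta)$ and applying Cauchy--Schwarz to the cross term $2\phi\,\nabla\phi\cdot\nabla\zeta/\zeta$. Applying this once for $\zeta$ against $\phi^2$ and once for $\chi$ against $\psi^2$, using (\ref{sta}) to substitute $-\Delta\zeta/\zeta$ and $-\Delta\chi/\chi$, dividing by $\lambda$ and $\gamma$ respectively, and discarding the nonnegative $\eta$-terms yields in both cases an inequality of the form
\[
(\text{diagonal terms}) + \bigl(\text{cross terms involving the unknown ratios } \chi/\zeta,\ \zeta/\chi\bigr) \;\le\; \tfrac{1}{\lambda}\int |\nabla\phi|^2 + \tfrac{1}{\gamma}\int |\nabla\psi|^2,
\]
after which an application of AM--GM to the cross terms eliminates the unknown ratios.

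For part (1), the system $(G)_{\lambda,\gamma}$ has $F(u,v)=f'(u)g(v)$ and $G(u,v)=f(u)g'(v)$, so $F_u=f''(u)g(v)$, $G_v=f(u)g''(v)$ and $F_v=G_u=f'(u)g'(v)$. The diagonal contributions $\int f''(u)g(v)\phi^2$ and $\int f(u)g''(v)\psi^2$ appear directly, while the off-diagonals combine to $\int f'(u)g'(v)\bigl(\frac{\chi}{\zeta}\phi^2+\frac{\zeta}{\chi}\psi^2\bigr)$; the bound $\frac{\chi}{\zeta}\phi^2+\frac{\zeta}{\chi}\psi^2 \ge 2|\phi\psi| \ge 2\phi\psi$, combined with the nonnegativity of $f'(u)g'(v)$ guaranteed by (R), delivers the $2\int f'(u)g'(v)\phi\psi$ term and completes (\ref{gra}). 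For part (2), $(H)_{\lambda,\gamma}$ has $F(u,v)=f(u)g'(v)$ and $G(u,v)=f'(u)g(v)$; now $F_u=G_v=f'(u)g'(v)$ yield the diagonal $\int f'(u)g'(v)(\phi^2+\psi^2)$, while the off-diagonals $F_v=f(u)g''(v)$ and $G_u=f''(u)g(v)$ feed the cross term
\[
f(u)g''(v)\frac{\chi}{\zeta}\phi^2+f''(u)g(v)\frac{\zeta}{\chi}\psi^2 \;\ge\; 2\sqrt{f(u)f''(u)g(v)g''(v)}\,\phi\psi,
\]
where we used AM--GM together with $f,f'',g,g''\ge 0$ under (R); this produces (\ref{twist}).

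The only genuinely delicate step is the use of $\phi^2/\zeta$ and $\psi^2/\chi$ as admissible test functions, since $\zeta,\chi$ vanish on $\partial\Omega$. I would regularize by replacing $\zeta$ with $\zeta+\varepsilon$ in the Picone identity (and similarly for $\chi$) and pass to the limit $\varepsilon \downarrow 0$; the Hopf boundary lemma gives $\zeta,\chi \gtrsim \mathrm{dist}(\cdot,\partial\Omega)$ near $\partial\Omega$, which together with $\phi,\psi \in H_0^1(\Omega)$ keeps the relevant integrands in $L^1$ uniformly in $\varepsilon$ and validates the monotone passage to the limit. Beyond this, the computation is bookkeeping with Cauchy--Schwarz and AM--GM.
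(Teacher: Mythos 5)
Your proposal is correct and follows essentially the same route as the paper: test the linearized system from Theorem A with $\phi^2/\zeta$ and $\psi^2/\chi$ (equivalently, the Picone inequality $\int \frac{-\Delta E}{E}\phi^2 \le \int |\nabla \phi|^2$), add the two inequalities, discard the nonnegative $\eta$-terms, and apply AM--GM to the cross terms carrying the ratios $\chi/\zeta$, $\zeta/\chi$; your early division by $\lambda,\gamma$ versus the paper's final rescaling $\phi \to \phi/\sqrt{\lambda}$, $\psi \to \psi/\sqrt{\gamma}$ is only a bookkeeping difference. The paper handles admissibility of the test functions more simply than your $\zeta+\varepsilon$ regularization, by first taking $\phi,\psi \in C_c^\infty(\Omega)$ (so $\zeta,\chi$ are bounded away from zero on their supports) and then extending to $H_0^1(\Omega)$ by density, but your variant also works.
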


\begin{proof} We will prove inequalities (\ref{gra}) and (\ref{twist}) for $ \phi,\psi \in C_c^\infty(\Omega)$ and then a standard density argument extends the inequalities to $ \phi,\psi \in H_0^1(\Omega)$. \\
 (1)   By Theorem A  there is some $ 0 < \zeta,\chi$ such that
\[ -\Delta \zeta \ge \lambda f''(u) g(v) {\zeta}  + \lambda f'(u) g'(v) {\chi}  \quad \mbox{and} \quad {-\Delta \chi} \ge \gamma f'(u) g'(v) {\zeta} + \gamma f(u) g''(v) {\chi} \quad \mbox{in $ \Omega$} .\]
Consider test functions  $\phi,\psi \in C_c^\infty(\Omega)$ and multiply both sides of the above inequalities with $\frac{\phi^2}{\zeta}$ and $\frac{\psi^2}{\chi}$ to obtain
\begin{eqnarray*}
&&-\int  |\nabla\zeta|^2 \frac{\phi^2}{\zeta^2}+ 2 \int \nabla \phi\cdot\nabla \zeta\ \frac{\phi}{\zeta} \ge \int   \lambda f'(u) g'(v) \phi^2 \frac{\chi}{\zeta} +\int  \lambda f(u) g''(v) \phi^2 ,\\
&&-\int  |\nabla\chi|^2 \frac{\psi^2}{\chi^2}+ 2 \int \nabla \psi\cdot\nabla \chi\ \frac{\psi}{\chi} \ge  \int \gamma  f(u) g''(v) \psi^2 +\int \gamma f'(u) g'(v) \psi^2 \frac{\zeta}{\chi},
  \end{eqnarray*}  note there are no issues regarding the functions $\frac{\phi^2}{\zeta}, \frac{\psi^2}{\chi}$ after one considers the fact that $ \zeta$ and $ \chi$ are smooth and positive on the support of $ \phi$ and $ \psi$.
Apply Young's inequality for the left hand side of each inequality and add them to get
\begin{eqnarray*}
\lambda  \int f''(u) g(v) \phi^2 + \gamma \int  f(u) g''(v) \psi^2 + \int f'(u) g'(v) \left(\lambda \phi^2 \frac{\chi}{\zeta} + \gamma \psi^2 \frac{\zeta}{\chi} \right) \le \int |\nabla \phi|^2 +\int |\nabla \psi|^2.
  \end{eqnarray*}
Simple calculations show that the third term is an upper bound for
$$  2\sqrt{\lambda \gamma}   \int  f'(u) g'(v) \phi\psi.$$
Then, replacing $ \phi$ with $ \frac{\phi}{\sqrt{\lambda}}$ and $ \psi$ with $ \frac{\psi}{\sqrt{\gamma}}$ gives the desired result.

(2) Proof is quite similar to (1).   By Theorem A there is some $ 0 < \zeta,\chi$ such that
\[ -\frac{\Delta \zeta}{ \zeta} \ge \lambda f'(u) g'(v) + \lambda f(u) g''(v) \frac{\chi}{\zeta} \quad \mbox{and} \quad -\frac{\Delta \chi}{\chi} \ge \gamma f''(u) g(v) \frac{\zeta}{\chi} + \gamma f'(u) g'(v) \quad \mbox{in $ \Omega$}, \]  and we now multiply the first equation by $ \phi^2$ and the second by $ \psi^2$ and add the equations and integrate over $\Omega$. In addition we use the   fact that
\[ \int_\Omega \frac{-\Delta E}{E} \phi^2 \le \int | \nabla \phi|^2,\] for any $ E>0$ and $ \phi \in H_0^1(\Omega)$.  Doing this one obtains
\[\int_\Omega f'(u) g'(v) (\lambda \phi^2 + \gamma \psi^2) + \int_\Omega \lambda f(u) g''(v) \phi^2 \frac{\chi}{\zeta} + \gamma f''(u) g(v) \psi^2 \frac{\zeta}{\chi} \le \int_\Omega | \nabla \phi|^2 + | \nabla \psi|^2.\]  Again some simple algebra shows that
\[ 2 \sqrt{\lambda \gamma} \int_\Omega \sqrt{ f(u) f''(u) g(v) g''(v)} \phi \psi,\] is a lower bound for the second integral.  Using this lower bound and replacing $ \phi$ with $ \frac{\phi}{\sqrt{\lambda}}$ and $ \psi$ with $ \frac{\psi}{\sqrt{\gamma}}$ finishes the proof.

\end{proof}

  In Section 2, we explore the regularity of extremal solutions for systems  $(G)_{\lambda,\gamma}$ and $(H)_{\lambda,\gamma}$ with general nonlinearities and,  in then Section 3 we consider explicit nonlinearities.   We finish the current section by this point that in \cite{craig0} the system
 \begin{eqnarray*}
(E)_{\lambda,\gamma}\qquad  \left\{ \begin{array}{lcl}
\hfill   -\Delta u    &=& \lambda e^v \qquad \Omega  \\
\hfill -\Delta v &=& \gamma e^u  \qquad \Omega,  \\
\hfill u &=& v =0 \qquad  \pOm,
\end{array}\right.
  \end{eqnarray*} was examined.  It was shown that if $ \Omega$ is a bounded domain in $ \IR^N$ where $ N \le9$, then the extremal solution $(u^*,v^*)$ associated with $(\lambda^*,\gamma^*) \in \Upsilon$ is bounded if
  \[ \frac{N-2}{8} < \frac{\gamma^*}{\lambda^*} < \frac{8}{N-2}.\]    Note that as one gets closer to the diagonal parameter range  $ \gamma=\lambda$ that better regularity results are obtained.  At the diagonal the system can be shown to reduce to the scalar equation $ -\Delta u = \lambda e^u$.  This phenomena will also be present in Section 3 where we consider explicit nonlinearities.

  \begin{acknow}  The authors would like to thank the anonymous referee for many valuable suggestions. In addition in the original version of this paper we had stronger assumptions on $f$ and $g$ in Theorem \ref{nedev}.  The referee suggested this could probably be weakened to the current conditions.
  
  \end{acknow}

  \section{General nonlinearities}
We begin by examining $(G)_{\lambda,\gamma}$ in the case of general nonlinearities and we show the extremal solutions are bounded in low dimensions and our methods of proof are close to  \cite{Nedev}.
\begin{thm} \label{nedev} Suppose that $ \Omega$ is a bounded smooth convex domain in $ \IR^N$  where $N \le 3$ and suppose $f$ and $g$ both satisfy condition (R). We also assume that $ f'(0)>0$ and $ g'(0)>0$.  In addition we assume that $ f',g'$ are convex and there is some $ \xi >0$ such that
\begin{equation} \label{thing}
 \liminf_{u \rightarrow \infty} f''(u) =\infty, \qquad \liminf_{v \rightarrow \infty} {g''(v)}=\infty.
\end{equation}
Let $(\lambda^*,\gamma^*) \in \Upsilon$.  Then the associated extremal solution of  $ (G)_{\lambda^*,\gamma^*}$ is bounded.
\end{thm}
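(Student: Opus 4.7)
The plan is to adapt Nedev's scalar argument \cite{Nedev} to the system setting. The goal is to establish a uniform $L^p(\Omega)$-bound, for some $p > N/2$, on the right-hand sides $f'(u_\lambda) g(v_\lambda)$ and $f(u_\lambda) g'(v_\lambda)$ of $(G)_{\lambda, \sigma \lambda}$ that holds along the ray $\Gamma_\sigma$ uniformly in $\lambda < \lambda^*$. Since $N \le 3$, such a bound combined with standard elliptic $W^{2,p}$-regularity gives $u_\lambda, v_\lambda \in L^\infty(\Omega)$ uniformly in $\lambda$, and the conclusion follows upon passage to the limit $\lambda \nearrow \lambda^*$.

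The first step is to test the stability inequality (\ref{gra}) with $\phi = f(u_\lambda) - 1$ and $\psi = g(v_\lambda) - 1$, which lie in $H_0^1(\Omega)$ because $f(0) = g(0) = 1$; the right-hand side of (\ref{gra}) then becomes $\frac{1}{\lambda}\int f'(u)^2 |\nabla u|^2 + \frac{1}{\gamma}\int g'(v)^2 |\nabla v|^2$. To re-express these gradient terms in terms of $f, g$ themselves, multiply the first equation of $(G)_{\lambda,\gamma}$ by the primitive $P(u) := \int_0^u f'(t)^2\,dt$ and integrate by parts:
\[
\int f'(u)^2 |\nabla u|^2 \;=\; \lambda \int f'(u) g(v)\, P(u).
\]
Since $f$ is convex, $f'$ is increasing and $P(u) \le f'(u)(f(u)-1)$; the $v$-equation is treated analogously. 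Substituting back into (\ref{gra}) produces the key integral inequality
\[
\int f''(u) g(v)(f(u)-1)^2 + \int f(u) g''(v)(g(v)-1)^2 + 2 \int f'(u) g'(v)(f(u)-1)(g(v)-1) \le \int f'(u)^2 g(v)(f(u)-1) + \int f(u) g'(v)^2 (g(v)-1).
\]

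To turn this into a genuine $L^p$-bound, I would complement it with a second inequality obtained by the same procedure applied with $\phi = f'(u) - f'(0)$ and $\psi = g'(v) - g'(0)$, which belong to $H_0^1(\Omega)$ because $f'(0), g'(0) > 0$. Here the new hypothesis that $f'$ and $g'$ are convex is exactly what allows the bound $\int_0^u f''(t)^2\,dt \le f''(u)(f'(u)-f'(0))$ by the same monotonicity argument. Combining the two resulting inequalities, exploiting $\liminf_{u \to \infty} f''(u) = \liminf_{v\to\infty} g''(v) = \infty$ to make the left-hand side strictly dominate the right-hand side for large $(u,v)$, and splitting integrals over $\{u \le M\} \cup \{u > M\}$ with $M$ large, one extracts a uniform-in-$\lambda$ estimate on $\int f(u_\lambda)^a g(v_\lambda)^b$ for suitable exponents, giving the desired $L^p$ control on $f'(u)g(v)$ and $f(u)g'(v)$ for some $p > N/2$.

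The main obstacle is the last combining step: for general convex $f,g$ the leading-order terms on both sides of the key integral inequality scale comparably, so only the extra hypotheses that $f', g'$ are convex and that $f'', g''$ diverge at infinity allow one to force strict domination of the LHS. Carefully balancing the contributions from the two inequalities and tracking the interaction of the diverging $f'', g''$ with the cross-coupling term $2\int f'(u) g'(v)(\cdots)$ is the most delicate point. The cross term itself is nonnegative and can be harmlessly discarded when estimating the LHS from below. The convexity of $\Omega$ enters via Cabré-type boundary estimates (handling the curvature contribution of $\partial\Omega$) to guarantee that the elliptic regularity constants, and hence the final $L^\infty$-bound on $(u_\lambda, v_\lambda)$, are uniform up to the boundary as $\lambda \nearrow \lambda^*$.
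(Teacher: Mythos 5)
Your second pair of test functions, $\phi=f'(u)-f'(0)$ and $\psi=g'(v)-g'(0)$, is exactly the one the paper uses, but the crude convexity bound you propose, $\int_0^u f''(t)^2\,dt\le f''(u)(f'(u)-f'(0))$, destroys precisely the information that makes the argument close. Write $a=f'(0)$, $b=g'(0)$. If you insert that bound into (\ref{gra}), the dangerous term $\int f''(u)g(v)(f'(u)-a)^2$ appears on both sides and cancels exactly, leaving only
\[ 2\int f'(u)g'(v)(f'(u)-a)(g'(v)-b)\;\le\; a\int f''(u)(f'(u)-a)\,g(v)\;+\;b\int g''(v)(g'(v)-b)\,f(u), \]
an inequality with no mechanism for controlling its right-hand side. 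The paper instead integrates $\frac{1}{\lambda}\int f''(u)^2|\nabla u|^2$ by parts keeping the $f'''$-term, which, after using the equation, retains the extra positive terms $\int h_1(u)f'(u)g(v)$ and $\int h_2(v)f(u)g'(v)$ on the left, with $h_1(u)=\int_0^u(f'(t)-a)f'''(t)\,dt$. Convexity of $f'$ together with superlinearity gives $h_1(u)/f''(u)\to\infty$, and $\liminf f''=\liminf g''=\infty$ gives $\sup_{u>kT}\frac{f(u)}{(f'(u)-a)f'(u)}\to 0$ as $k\to\infty$; these two facts are what allow $a\int f''(f'-a)g$ and its twin to be absorbed into the $h_1,h_2$ terms, a constant $C(k,T)$, and a small multiple of the cross term (splitting over $\{u\ge T\}$, $\{u<T,\,v<kT\}$, $\{u<T,\,v\ge kT\}$). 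That absorption is the whole proof, and your plan defers it to a ``combining step'' with no mechanism. Your first inequality (from $\phi=f(u)-1$, $\psi=g(v)-1$ with $P(u)\le f'(u)(f(u)-1)$) cannot supply one: already in the scalar problem the same computation collapses to a trivial statement, and for $f(u)=(1+u)^p$ the right-hand term $\int f'^2g(f-1)$ dominates the left-hand term $\int f''g(f-1)^2$ pointwise (the ratio is at least $p/(p-1)$), so the hypothesis $f''\to\infty$ is of no help there.

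Two further points. The cross term is not ``harmlessly discarded'' in the paper: the uniform bound $\int f'g'(f'-a)(g'-b)\le C$ is item (i) of Lemma \ref{nedev_L}, it is the term into which the contribution from $\{u<T,\,v\ge kT\}$ is absorbed, and in the proof of the theorem it is what yields $f'(u^*)g'(v^*)\in L^2(\Omega)$, the starting point of the Nedev-style domain decomposition ($\Omega_1$, $\Omega_2$, with $\alpha=4/5,\,1/2,\,1/4$) and elliptic bootstrap that gives boundedness for $N\le 3$; your final bootstrap is only sketched and would need this $L^2$ information or a substitute. Finally, the convexity of $\Omega$ enters through moving-plane boundary estimates (Remark \ref{rem1}, citing \cite{t}), which convert the weighted $L^1(\delta)$ bounds into uniform estimates near $\partial\Omega$, not through Cabr\'e-type curvature estimates; this is minor, but the role you assign to convexity is not the one actually used.
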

For radial domains we obtain similar results but in higher dimensions and our methods of proof follow very closely to  \cite{CC}  and \cite{v}.
\begin{thm} \label{radial}
 Let $\Omega = B_1$, $N \ge 3$, and  $f$ and $g$ both satisfy condition (R) and in addition we assume that there is some $ \xi>0$ such that
\[ \liminf_{u \rightarrow \infty} f''(u)=\infty, \quad \liminf_{v \rightarrow \infty} g''(v)=\infty. \]  Let $ (\lambda^*,\gamma^*) \in \Upsilon$ and let $ (u^*,v^*)$ denote the extremal solution associated with $ (G)_{\lambda^*,\gamma^*}$.  Then
 \begin{enumerate}
 \item if $N<10$, then $u^*, v^*\in L^\infty(B_1)$,
 \item if $N=10$, then $u^*(r),v^*(r) \le C_{\lambda^*,\gamma^*}  (1+|\log r|)$ for $r\in (0,1]$,
 \item if $N>10$, then $u^*(r),v^*(r)\le C_{\lambda^*,\gamma^*,N} r^{-\frac{N}{2}+\sqrt{N-1}+2} $ for $r\in (0,1]$.
 \end{enumerate}
\end{thm}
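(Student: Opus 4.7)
The plan is to adapt the radial stability technique of Cabr\'e--Capella \cite{CC} (refined by Villegas \cite{v}) to the coupled system $(G)_{\lambda,\gamma}$, using the stability inequality \eqref{gra} to absorb the interaction between $u$ and $v$. Fix $(\lambda,\gamma)\in\Gamma_\sigma$ with minimal solution $(u_\lambda,v_\lambda)$. By the radial symmetry of $B_1$, the minimality and positivity of $(u_\lambda,v_\lambda)$, and the fact that $f',g'>0$, both components are radial and radially non-increasing; set $P(r):=-u_\lambda'(r)\ge 0$, $Q(r):=-v_\lambda'(r)\ge 0$ on $(0,1]$. Writing the two equations in radial form and differentiating once in $r$ gives the linearized system
\begin{align*}
L(P) &= \lambda\bigl[f''(u_\lambda)g(v_\lambda)P + f'(u_\lambda)g'(v_\lambda)Q\bigr],\\
L(Q) &= \gamma\bigl[f'(u_\lambda)g'(v_\lambda)P + f(u_\lambda)g''(v_\lambda)Q\bigr],
\end{align*}
where $L:=-\partial_r^2-\frac{N-1}{r}\partial_r+\frac{N-1}{r^2}$ is the radial Laplacian conjugated by taking a derivative.

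The decisive step is to test \eqref{gra} against the radial pair $\phi(x):=\eta(|x|)P(|x|)$, $\psi(x):=\eta(|x|)Q(|x|)$ with $\eta\in C_c^\infty(0,1)$. Multiplying the first linearized equation by $\eta^2P$ and the second by $\eta^2Q$ and adding, one sees that
\[
\tfrac{1}{\lambda}\,\eta^2PL(P) + \tfrac{1}{\gamma}\,\eta^2QL(Q) = \eta^2\bigl[f''(u_\lambda)g(v_\lambda)P^2 + f(u_\lambda)g''(v_\lambda)Q^2 + 2f'(u_\lambda)g'(v_\lambda)PQ\bigr],
\]
which is exactly the left-hand integrand of \eqref{gra} with this choice of $(\phi,\psi)$, so the troublesome cross term is reproduced \emph{identically} by the off-diagonal terms of the linearized system, with no algebraic inequality lost. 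Integrating $\int_0^1\eta^2PL(P)\,r^{N-1}\,dr$ by parts (using $\eta \in C_c^\infty(0,1)$, so there are no boundary terms) and comparing the result with the Dirichlet integral $\int_0^1(\eta P)'^2\,r^{N-1}\,dr$ on the right of \eqref{gra}, and doing the same for $Q$, the stability inequality collapses to
\begin{equation*}
(N-1)\int_0^1\eta^2\Bigl(\tfrac{P^2}{\lambda}+\tfrac{Q^2}{\gamma}\Bigr)r^{N-3}\,dr \le \int_0^1(\eta')^2\Bigl(\tfrac{P^2}{\lambda}+\tfrac{Q^2}{\gamma}\Bigr)r^{N-1}\,dr.
\end{equation*}

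This is precisely the scalar Cabr\'e--Capella stability identity applied to the combined weight $w:=P^2/\lambda+Q^2/\gamma$. From here the radial analysis of \cite{CC,v} runs essentially verbatim: a suitable choice of power test functions $\eta(r)\approx r^\alpha$ localised near the origin, combined with Hardy's inequality and the ODE for $(P,Q)$, yields a pointwise estimate $P(r)+Q(r)\le C\,r^{-(N/2-\sqrt{N-1}-1)}$ in the supercritical regime, a logarithmic bound at the critical dimension $N=10$, and uniform $L^\infty$ bounds when $N<10$. Integrating from $r$ to $1$ against the boundary conditions $u_\lambda(1)=v_\lambda(1)=0$ gives the three estimates stated in the theorem, with constants uniform in $\lambda\in\Gamma_\sigma$; the hypothesis $\liminf_\infty f''=\liminf_\infty g''=\infty$ is used only to keep these constants uniform as $\lambda\nearrow\lambda^*$, and monotone convergence of $(u_\lambda,v_\lambda)$ to $(u^*,v^*)$ then transfers the bounds to the extremal solution.

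I expect the main obstacle to be the algebraic matching of the cross term in the decisive step: bounding $2\int f'(u_\lambda)g'(v_\lambda)\phi\psi$ by Cauchy--Schwarz or Young would lose the sharp constant $N-1$ and destroy the critical dimension $10$. Making the choice $\phi=\eta P$, $\psi=\eta Q$ is what causes this term to cancel identically with the off-diagonal terms in the differentiated system, reducing the coupled problem to the scalar Cabr\'e--Capella--Villegas framework.
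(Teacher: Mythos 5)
Your proposal follows essentially the same route as the paper: differentiate the system in $r$, test the stability inequality \eqref{gra} with $(\phi,\psi)=(\eta u_r,\eta v_r)$ so that the cross terms cancel identically against the linearized system, obtain the weighted inequality $(N-1)\int w\,\eta^2 r^{N-3}\le \int w\,(\eta')^2 r^{N-1}$ for $w=u_r^2/\lambda+v_r^2/\gamma$, and then run the Cabr\'e--Capella--Villegas power test function analysis with critical dimension $10$, with uniformity in $\lambda$ coming from the $L^1$/$H^1$ bounds away from the origin and the limit $\lambda\nearrow\lambda^*$. The only cosmetic difference is that the paper converts the resulting weighted $L^2$ bound on $(u_r,v_r)$ into the pointwise estimates via Cauchy--Schwarz on dyadic annuli rather than through a pointwise derivative bound, which is an equivalent finishing step.
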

We are unable to prove the analogous version  for the system $(H)_{\lambda,\gamma}$ and hence we restrict our attention to the special case.
\begin{thm} \label{pppp}  Suppose $ \Omega$ a bounded smooth convex domain in $ \IR^3$ and $ 1 < q < \infty$. Assume $f$ satisfies (R) and we also assume that $ f'' \ge C>0$.  Let $(\lambda^*,\gamma^*) \in \Upsilon$.  Then the associated extremal solution of  $ (H)_{\lambda^*,\gamma^*}$ for $g(v)=(1+v)^q$ is bounded.
\end{thm}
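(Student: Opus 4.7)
The plan is to adapt Nedev's scalar technique, as used in Theorem \ref{nedev}, to the system $(H)_{\lambda,\gamma}$ by exploiting the explicit power structure of $g(v) = (1+v)^q$. I would work with the minimal smooth solutions $(u_\lambda, v_\lambda)$ along the ray $\Gamma_\sigma$ with $\sigma = \gamma^*/\lambda^*$ and aim for uniform $L^\infty$ bounds as $\lambda \nearrow \lambda^*$.

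The key integral estimate is obtained by applying the stability inequality (\ref{twist}) with $\phi \equiv 0$ and $\psi = (1+v)^q - 1 \in H^1_0(\Omega)$. Using the equation $-\Delta v = \gamma f'(u)(1+v)^q$ and integrating by parts against $\frac{(1+v)^{2q-1}-1}{2q-1}$ computes $\int |\nabla \psi|^2 = q^2 \int (1+v)^{2q-2} |\nabla v|^2 = \frac{\gamma q^2}{2q-1}\int f'(u)(1+v)^q[(1+v)^{2q-1}-1]$. After expanding the square $((1+v)^q - 1)^2$ and collecting terms, I arrive at the Nedev-type inequality
\[
\frac{q(q-1)}{2q-1}\int f'(u_\lambda)(1+v_\lambda)^{3q-1} \le 2q \int f'(u_\lambda)(1+v_\lambda)^{2q-1},
\]
and H\"older's inequality with exponent $(3q-1)/(2q-1)$ bootstraps this to
\[
\int f'(u_\lambda)(1+v_\lambda)^{3q-1}\, dx \le C_q \int f'(u_\lambda)\, dx. \qquad (*)
\]

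Next I would bound $\int f'(u_\lambda)\,dx$ uniformly in $\lambda$. This uses two ingredients: (i) the convexity of $\Omega \subset \mathbb{R}^3$ combined with the monotonicity of the system nonlinearities permits the moving plane method to give uniform $L^\infty$-bounds on $(u_\lambda, v_\lambda)$ in a fixed neighborhood $\Omega \setminus \Omega_\delta$ of $\partial\Omega$; (ii) a Lions-type argument, testing each equation against the first Dirichlet eigenfunction $\varphi_1$ and using the superlinearity of $f$ and of $(1+v)^q$ for $q>1$, yields uniform bounds $\int u_\lambda \varphi_1,\ \int v_\lambda\varphi_1 \le C$ and hence $\int f'(u_\lambda)(1+v_\lambda)^q \varphi_1 \le C$. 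Since $\varphi_1$ is bounded below on $\Omega_\delta$ while $f'(u_\lambda)$ is bounded on $\Omega \setminus \Omega_\delta$ by (i), this gives $\int f'(u_\lambda)\,dx \le C$.

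Combining $(*)$ with this uniform bound, and using $f'' \ge C > 0$ (which implies $f'(u) \ge Cu + f'(0)$, preventing degeneracy except on the small set where $u$ is small, controlled by the boundary estimate), I can pass to $f'(u)(1+v)^q \in L^p$ for some $p > N/2 = 3/2$. Standard $L^p$ elliptic regularity applied to $-\Delta v = \gamma f'(u)(1+v)^q$ then yields $\|v_\lambda\|_{L^\infty} \le C$ uniformly. Once $v$ is bounded, $g'(v) \le M$ and the $u$-equation is scalar semilinear with bounded coefficient $g'(v)$; the $\psi = 0$ specialization of (\ref{twist}) provides the scalar semistability $\lambda \int f'(u)g'(v)\phi^2 \le \int |\nabla \phi|^2$, and Nedev's theorem applied in $\mathbb{R}^3$ to $-\Delta u = \lambda f(u)g'(v)$ (treating $g'(v)$ as a bounded coefficient) finishes $u \in L^\infty$. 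The principal obstacle is the conversion of the weighted bound in $(*)$ into an unweighted $L^p$-bound on $1+v$, where the interplay between the boundary estimate from convexity and the growth bound $f'(u)\ge Cu$ coming from $f''\ge C$ is essential.
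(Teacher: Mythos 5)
Your derivation of the weighted estimate $(*)$ is fine as far as it goes (the coefficient $q-\tfrac{q^2}{2q-1}=\tfrac{q(q-1)}{2q-1}>0$ is correct, and the uniform bound on $\int f'(u_\lambda)$ does follow from the moving-plane boundary estimates plus the eigenfunction/$L^1$ argument, essentially Remark \ref{rem1}). The genuine gap is the next sentence: ``using $f''\ge C$ (which implies $f'(u)\ge Cu+f'(0)$) \dots I can pass to $f'(u)(1+v)^q\in L^p$ for some $p>3/2$.'' By choosing $\phi\equiv 0$ in (\ref{twist}) you have thrown away the cross term $2\int\sqrt{f f'' g g''}\,\phi\psi$, which is the only place the hypothesis $f''\ge C>0$ gives real leverage; a pointwise \emph{lower} bound $f'(u)\ge Cu$ goes the wrong way for integrability and cannot upgrade anything. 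As a consequence, every estimate you hold carries at most the \emph{first} power of $f'(u)$: you know $\int f'(u)(1+v)^{3q-1}\le C$, $\int f'(u)\le C$, and (from $L^1$ right-hand sides) $u,v\in L^{3_-}$ uniformly. For general admissible $f$ — say $f(u)=(1+u)^m$ with $m$ large, or $f(u)=e^u$ — these facts do not control $\bigl(f'(u)\bigr)^p(1+v)^{qp}$ for any $p>3/2$: any H\"older splitting can use the mixed bound with exponent at most $1$, leaving a factor $(f'(u))^{p-1}$ for which you have no integrability whatsoever (your bounds only barely suffice in the borderline case $f'(u)\simeq u$). So the step ``bound $v^*$ first by elliptic regularity'' cannot be completed from $(*)$, and the subsequent reduction of the $u$-equation to a Nedev-type scalar problem (which itself would need adapting to the variable bounded weight $g'(v)$) never gets off the ground.

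The paper's proof keeps both test functions alive: it puts $\phi=f(u)-1$ and $\psi=(1+v)^t-1$, $1<t<t_+(q)=q+\sqrt{q(q-1)}$, into (\ref{twist}). The cross term together with $f''\ge C$ then produces the higher-power estimate $\int f(u)^{3/2}(1+v)^{q+t-1}\le C$ (note the exponent $3/2=N/2$ on the full nonlinearity $f$), alongside $\int f'(u)f(u)(1+v)^{q-1}\le C$ and $\int f'(u)(1+v)^{q+2t-1}\le C$. The second of these, via $-\Delta\bigl(f(u)\bigr)\le \lambda q\,f'(u)f(u)(1+v)^{q-1}\in L^1$ and convexity, gives $f(u^*)\in L^{3_-}$, and a H\"older interpolation between this and the $f^{3/2}$-weighted bound yields $f(u^*)(1+v^*)^{q-1}\in L^{3/2+\epsilon}$, so $u^*$ is bounded \emph{first}; only then is $v^*$ handled (using $\int(1+v^*)^{2t+q-1}<\infty$ and the boundedness of $u^*$). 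If you want to rescue your route, you must reintroduce a $u$-dependent test function $\phi$ so that the cross term supplies an estimate with at least the power $f(u)^{3/2}$ (or $f'(u)^{p}$, $p>3/2$) jointly with a positive power of $(1+v)$; without that, the hypotheses on $f$ are not actually used and the claim fails.
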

We fix the notation $ t_+(q):=q+\sqrt{q(q-1)}$ which will play an important role in the proof of the above theorem and theorems to follow.
The following lemma is used to prove Theorem \ref{nedev} where a convex domain is assumed but we prove the lemma for general domains.
\begin{lemma} \label{nedev_L} Suppose $ \Omega$ is a bounded domain in $ \IR^N$ and $ f$ and $g$ satisfy the conditions from Theorem \ref{nedev} and define $a:=f'(0)>0, b:=g'(0)>0$.  Let $ (\lambda^*,\gamma^*) \in \Upsilon$ and let $ (u^*,v^*)$ denote the extremal solution associated with $ (G)_{\lambda^*,\gamma^*}$. Then there is some $  C < \infty$ such that
\[(i) \; \; \int f'(u^*) g'(v^*) (f'(u^*)-a) (g'(v^*)-b) \le C, \quad (ii) \; \; \int (f'(u^*)-a) f''(u^*) g(v^*) \le C, \] \[ \quad (iii) \; \; \int (g'(v^*)-b) g''(v^*) f(u^*) \le C.\]
\end{lemma}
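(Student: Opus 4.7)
\medskip

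My plan is to work with the minimal solutions $(u_\lambda, v_\lambda)$ of $(G)_{\lambda,\sigma\lambda}$ for $\lambda<\lambda^*$ (where $\sigma = \gamma^*/\lambda^*$), establish the three estimates uniformly in $\lambda$, and then pass to the limit $\lambda\nearrow\lambda^*$ by monotone convergence (all three integrands are nonnegative since $f'\ge a$, $g'\ge b$, $f''\ge 0$, $g''\ge 0$, and $u_\lambda, v_\lambda$ are pointwise increasing in $\lambda$). The key move is to apply the stability inequality (\ref{gra}) of Lemma \ref{stabb} with the test functions
$$\phi_\lambda := f'(u_\lambda) - a, \qquad \psi_\lambda := g'(v_\lambda) - b,$$
both of which lie in $H_0^1(\Omega)$ since $u_\lambda = v_\lambda = 0$ on $\partial\Omega$ and $a=f'(0)$, $b=g'(0)$.

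The main computation handles the gradient integrals on the right side of (\ref{gra}). By integration by parts together with the PDE,
$$ \int |\nabla(f'(u_\lambda))|^2 = -\int (f'(u_\lambda)-a)\,\Delta(f'(u_\lambda)),$$
and the identity $\Delta(f'(u_\lambda)) = f'''(u_\lambda)|\nabla u_\lambda|^2 - \lambda f''(u_\lambda) f'(u_\lambda) g(v_\lambda)$ combined with the convexity of $f'$ (so $f''' \ge 0$) and the lower bound $f'(u_\lambda) \ge a > 0$ produces
$$ \int |\nabla(f'(u_\lambda))|^2 \le \lambda \int (f'(u_\lambda)-a) f''(u_\lambda) f'(u_\lambda) g(v_\lambda),$$
with the analogous estimate for $\int |\nabla(g'(v_\lambda))|^2$. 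Plugging these into (\ref{gra}) and expanding $f'(u_\lambda) = (f'(u_\lambda)-a) + a$ (and similarly for $g'$), the terms $\int(f'(u_\lambda)-a)^2 f''(u_\lambda)g(v_\lambda)$ and $\int(g'(v_\lambda)-b)^2 g''(v_\lambda)f(u_\lambda)$ appear on both sides and cancel, leaving the cross control
$$ 2\!\int\! f'(u_\lambda) g'(v_\lambda)(f'(u_\lambda)-a)(g'(v_\lambda)-b) \le a\!\int\!(f'(u_\lambda)-a) f''(u_\lambda) g(v_\lambda) + b\!\int\!(g'(v_\lambda)-b) g''(v_\lambda) f(u_\lambda).$$

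To obtain the individual bounds on (ii) and (iii), I will next test each PDE against a tailored $H_0^1$-function. For example, testing $-\Delta u_\lambda = \lambda f'(u_\lambda)g(v_\lambda)$ against $f'(u_\lambda)-a$ yields the identity
$$ \int f''(u_\lambda)|\nabla u_\lambda|^2 = \lambda \int f'(u_\lambda)(f'(u_\lambda)-a) g(v_\lambda),$$
and a companion identity for $v_\lambda$. Combining these with a second use of (\ref{gra}) applied to test functions of the form $H(u_\lambda), K(v_\lambda)$ with $H'(u)^2$ chosen to reproduce the integrand of (ii) (and similarly for $K$), together with the standard $L^1$ bounds on $f'(u_\lambda)g(v_\lambda)$ and $f(u_\lambda)g'(v_\lambda)$ coming from testing the equations against the first eigenfunction of $-\Delta$ on $\Omega$, should close the system and produce uniform bounds on the two individual integrals. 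Monotone convergence then transfers the three uniform bounds from $(u_\lambda, v_\lambda)$ to $(u^*, v^*)$.

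The main obstacle lies precisely in this last step: the single application of (\ref{gra}) with the two ``derivative'' test functions yields only the combined inequality $2 I \le a J + b K$, so the individual control of $J$ and $K$ requires a careful second argument combining an identity from the PDEs with stability, and exploiting the strict positivity $a, b > 0$ and the convexity of $f$, $g$ and $f'$, $g'$. The hypothesis $\liminf f'' = \liminf g'' = \infty$ does not appear to be needed for this lemma itself, but ensures superlinear control useful in the subsequent proof of Theorem \ref{nedev}.
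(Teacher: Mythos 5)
Your opening is the same as the paper's: test \eqref{gra} with $\phi=f'(u_\lambda)-a$, $\psi=g'(v_\lambda)-b$, integrate by parts using the equations, and cancel the squared terms. But at that point you discard, via $f'''\ge 0$, exactly the terms that make the proof work, namely $\frac{1}{\lambda}\int (f'(u)-a)f'''(u)|\nabla u|^2$ and its $v$-analogue. What you are left with is only the combined inequality $2I\le aJ+bK$ (with $I,J,K$ the integrals in (i),(ii),(iii)), which by itself gives no control of $J$ and $K$ — and you acknowledge this but offer only a vague plan to fix it. The concrete pieces of that plan do not close the loop: testing $-\Delta u_\lambda=\lambda f'(u_\lambda)g(v_\lambda)$ against $f'(u_\lambda)-a$ produces $\int f''(u_\lambda)|\nabla u_\lambda|^2=\lambda\int f'(u_\lambda)(f'(u_\lambda)-a)g(v_\lambda)$, whose right-hand side involves $f'(f'-a)g$, not the integrand $(f'-a)f''g$ of (ii), and no choice of $H,K$ with ``$H'(u)^2$ reproducing the integrand of (ii)'' is specified or shown to yield an inequality in the right direction; the $L^1$ bounds from the eigenfunction test are far too weak to substitute.

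The paper's resolution is to keep the third-derivative terms: writing $(f'(u)-a)f'''(u)|\nabla u|^2=\nabla h_1(u)\cdot\nabla u$ with $h_1(u)=\int_0^u(f'(t)-a)f'''(t)\,dt$ and integrating by parts once more against the equation converts them into $\int h_1(u)f'(u)g(v)$ and $\int h_2(v)f(u)g'(v)$ on the left of the inequality (this is \eqref{mm}). The hypotheses $\liminf_{u\to\infty}f''(u)=\infty$, $\liminf_{v\to\infty}g''(v)=\infty$, combined with convexity and superlinearity, give $h_1(u)/f''(u)\to\infty$ and $h_2(v)/g''(v)\to\infty$; then $a\int(f'-a)f''g$ is absorbed by splitting: on $\{u\ge T\}$ by a small fraction of $\int h_1 f' g$, on $\{u<T,\,v<kT\}$ by a constant, and on $\{u<T,\,v\ge kT\}$ by $\sup_{u<T}\frac{f''(u)}{f'(u)}\cdot\sup_{v>kT}\frac{g(v)}{(g'(v)-b)g'(v)}$ times the cross term $I$, where the second supremum tends to $0$ as $k\to\infty$ precisely because $g''\to\infty$; symmetrically for $b\int(g'-b)g''f$. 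Choosing $k$ large then bounds all three integrals at once, uniformly along $\Gamma_\sigma$, and the monotone limit step you describe is fine. So two corrections: do not throw away the $f'''$, $g'''$ gradient terms — they are the engine of the argument — and retract the claim that $\liminf f''=\liminf g''=\infty$ is not needed for this lemma: it is used twice, both to get $h_1/f''\to\infty$ and to make the tail suprema small, and without it the absorption does not go through.
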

\begin{remark}  \label{rem1}  Let $ f_i,g_i$ denote smooth increasing nonlinearities with $ f_i(0),g_i(0)>0$ and we also assume
\begin{equation} \label{condi}
\liminf_{u \rightarrow \infty} \frac{f_1(u)}{u} = \liminf_{v \rightarrow \infty} \frac{g_2(v)}{v} =\infty.
\end{equation}    Let $(u_m,v_m)$  denote a sequence of  smooth solutions  of  $(P)_{\lambda_m, \sigma \lambda_m}$,  where $ 0< \sigma < \infty$ is fixed and $ \lambda_m$ is restricted to a compact subset of $(0,\infty)$ and  $ F(u,v)=f_1(u) g_1(v)$, $ G(u,v)= f_2(u) g_2(v)$.  Then we have the estimate
\[ \int f_1(u_m) g_1(v_m) \delta  +  \int f_2(u_m) g_2(v_m) \delta \le C,\]  where $ \delta(x):=dist(x,\pOm)$.  Applying  regularity theory shows that $ u_m,v_m$ are bounded in $L^1(\Omega)$.

On occasion we will restrict our attention to smooth convex domains where many of the proofs are much more compact. One can use the  moving plane to obtain uniform estimates for arbitrary positive solutions of our system in a  convex domain; see \cite{t}.    One first uses the results from \cite{t} to obtain estimates valid near $ \pOm$.    Suppose $(u_m,v_m)$ are as above.  For $ \E>0$ small define $\Omega_\E:=\{ x \in \Omega: \delta(x) <\E \}$.   Using results from \cite{t} shows there is some small $ \E>0$ (depending only on $ \Omega$) and $ 0<C$ such that $ \sup_{\Omega_\E} u_m+ \sup_{\Omega_\E} v_m \le C \| u_m\|_{L^1(\Omega)} + C  \| v_m\|_{L^1(\Omega)} $ and since $u_m,v_m$ are bounded in $L^1(\Omega)$ we see that $u_m$ and $v_m$ are bounded in $ \Omega_\E$.   Using the maximum principle there is some $ C_1>0$ such that $ u_m, v_m \ge C_1$ in the compliment of $ \Omega_\E$.

\end{remark}

\begin{proof}  All integrals are over $ \Omega$ unless otherwise stated.  Our approach will be to obtain uniform estimates for any minimal solution $(u,v)$ of $(G)_{\lambda,\gamma}$ on the ray $ \Gamma_\sigma$ and then one sends $ \lambda \nearrow \lambda^*$ to obtain the same estimate for $ (u^*,v^*)$.   Let $(u,v)$ denote a smooth minimal solution of $(G)_{\lambda,\gamma}$ on the ray $\Gamma_\sigma$ and  put $ \phi:= f'(u)-a$ and $ \psi:=g'(v)-b$ into (\ref{gra}) to obtain
\begin{eqnarray}\label{zzz}
\nonumber&&\int f''(u) g(v) (f'(u)-a)^2 + \int f(u) g''(v) (g'(v)-b)^2 + 2 \int f'(u) g'(v) (f'(u)-a)(g'(v)-b) \\ &\le&
 \frac{1}{\lambda} \int \nabla (f'(u)-a) f''(u) \cdot \nabla u + \frac{1}{\gamma} \int \nabla (g'(v)-b) g''(v) \cdot  \nabla v.
\end{eqnarray}
Integrating the right-side of (\ref{zzz}) by parts shows that
\[  \int \nabla (f'(u)-a) f''(u) \cdot \nabla u =- \int (f'(u)-a) f'''(u) | \nabla u|^2 +\int (f'(u)-a) f''(u) (-\Delta u).\]
\[  \int \nabla (g'(v)-b) g''(v) \cdot \nabla v =- \int (g'(v)-b) g'''(v) | \nabla v|^2 +\int (g'(v)-b) g''(v) (-\Delta v).\]
In addition the other term in  (\ref{zzz}) involving $v$ is of the similar form.   We use the equation $(G)_{\lambda,\gamma}$ to replace $ -\Delta u$ and $ -\Delta v$ in the last equalities and simplify to get
 \begin{eqnarray*}
&&\frac{1}{\lambda} \int (f'(u)-a) f'''(u) | \nabla u|^2 + \frac{1}{\gamma} \int (g'(v)-b) g'''(v) | \nabla v|^2 + 2 \int f'(u)g'(v) (f'(u)-a)(g'(v)-b) \\& \le&
 a \int f''(u) (f'(u)-a) g(v) + b \int g''(v) (g'(v)-b) f(u).
 \end{eqnarray*}
   We now define
 $ h_1(u):= \int_0^u (f'(t)-a) f'''(t) dt$ and $ h_2(v):=\int_0^v (g'(t)-b) g'''(t) dt$.  Subbing this into the previous inequality and integrating by parts and using  $(G)_{\lambda,\gamma}$ again we arrive at
 \begin{eqnarray} \label{mm}
\nonumber&& \int h_1(u) f'(u) g(v) + \int h_2(v) f(u) g'(v)  + 2 \int f'(u) g'(v) (f'(u)-a) (g'(v)-b) \\ & \le & a \int f''(u) (f'(u)-a) g(v) +  b \int g''(v) (g'(v)-b) f(u).
 \end{eqnarray}  Now suppose $ u > \alpha >0$.  Then we have
 \[ h_1(u) \ge \int_\alpha^u (f'(t)-a) f'''(t) dt \ge (f'(\alpha)-a) ( f''(u)-f''(\alpha)),\] and so using the condition on $f''(u)$ we see that
 \[ \liminf_{u \rightarrow \infty} \frac{h_1(u)}{f''(u)} \ge f'(\alpha)-a,\] for any $ \alpha >0$.  But since $ f$ is convex and superlinear at infinity we see that $ \lim_{u \rightarrow \infty} \frac{h_1(u)}{f''(u)}=\infty$.  Similarly $ \lim_{v \rightarrow \infty} \frac{h_2(v)}{g''(v)}=\infty$.

  We now estimate the integral $ \int f''(u) g(v)(f'(u)-a)$.   There is some $ T>1$ large such that for all $ u \ge T$ we have $ h_1(u) \ge 100(a+1) f''(u)$ for all $ u \ge T$.  Then we have
  \[ \int f''(u) g(v) (f'(u)-a) = \int_{u \ge T} + \int_{u <T}  \le \frac{1}{100(a+1)} \int h_1(u) g(v) (f'(u)-a) + \int_{u<T} \int f''(u) g(v) (f'(u)-a).\]   We now estimate this last integral.    Let $ T$ be as above and fixed and we let $ k \ge 1$ denote a natural number.
   \[ \int_{u<T} f''(u) g(v) (f'(u)-a) = \int_{u<T, v<kT} + \int_{u<T, v \ge kT} = C(k,T) + \int_{u<T, v \ge kT} f''(u) g(v) (f'(u)-a) \]
  and we now estimate this last integral.     One easily sees that this last integral is bounded above by
  \[ \sup_{u<T} \frac{f''(u)}{f'(u)} \sup_{v >kT} \frac{g(v)}{(g'(v)-b) g'(v)} \int (f'(u)-a) (g'(v)-b) f'(u) g'(v).\] Combining this all together we see that for all sufficiently large $ T$ and all $1 \le k$ there is some constant $ C(k,T)$ such that
\begin{eqnarray*}
 \int f''(u) g(v) (f'(u)-a) &\le &\frac{1}{100(a+1)} \int h_1(u) f'(u) g(v) + C(k,T)  \\
 && + \sup_{u<T} \frac{f''(u)}{f'(u)} \sup_{v >kT} \frac{g(v)}{(g'(v)-b) g'(v)} \int (f'(u)-a) (g'(v)-b) f'(u) g'(v).
 \end{eqnarray*}
 Using the same argument  one can show for all sufficiently large $T$ and for all $ 1 \le k$ there is some $ C(k,T)$ such that
\begin{eqnarray*}
 \int g''(v) f(u) (g'(v)-b) &\le &\frac{1}{100(b+1)} \int h_2(v) g'(v) f(u) + C(k,T)  \\
 && + \sup_{v<T} \frac{g''(v)}{g'(v)} \sup_{u >kT} \frac{f(u)}{(f'(u)-a) f'(u)} \int (f'(u)-a) (g'(v)-b) f'(u) g'(v).
 \end{eqnarray*}
   Since $ f'',g'' \rightarrow \infty$ we see that
 \[ \lim_{k \rightarrow \infty}  \sup_{u >kT} \frac{f(u)}{(f'(u)-a) f'(u)} =0,\] and similarly for the other term.  Hence by taking $ k$ sufficiently large we can substitute everything back into (\ref{mm}) and see that all the integrals in (\ref{mm}) are bounded independent of $ \lambda$.

\end{proof}

\noindent\textbf{Proof of Theorem \ref{nedev}.}  We assume that $ N=3$ and $ \Omega$ is convex domain in $ \IR^3$.      The case of $ N=1,2$ is easier and we omit their proofs.     We suppose that $ (\lambda^*,\gamma^*) \in \Upsilon$ and $(u^*,v^*)$ is the associated extremal solution of $(G)_{\lambda^*,\gamma^*}$.  Set $ \sigma=\frac{\gamma^*}{\lambda^*}$.
Using Remark \ref{rem1}  along with Lemma \ref{nedev_L} we see that $ f'(u^*) g'(v^*) \in L^2(\Omega)$.   Note that this and the convexity of $ g$ show that
\[ \int_\Omega \frac{f'(u^*)^2 g(v^*)^2}{(v^*+1)^2} \le C.\]
From Lemma \ref{nedev_L} and Remark \ref{rem1} we have   $ -\Delta u^*, -\Delta v^* \in L^1$ and hence we have $ u^*,v^* \in L^{3_-}$, ie. $ L^p$ for any $ p <3$.   We now use the domain decomposition method as in \cite{Nedev}.  Set
\begin{eqnarray*}
 \Omega_1 &:=& \left\{ x: \frac{f'(u^*)^2 g(v^*)^2}{(v^*+1)^2} \ge f'(u^*)^{2-\alpha} g(v^*)^{2-\alpha} \right\},\\
\Omega_2 &:=& \Omega \backslash \Omega_1 = \left\{ x : f'(u^*) g(v^*) \le (v^*+1)^\frac{2}{\alpha} \right\},
 \end{eqnarray*}
where $  0 < \alpha  $ is to be picked later.  First note that
\[ \int_{\Omega_1} (f'(u^*) g(v^*))^{2-\alpha} \le \int_\Omega \frac{f'(u^*)^2 g(v^*)^2}{(v^*+1)^2} \le C.\] Similarly we have
\[ \int_{\Omega_2} (f'(u^*) g(v^*))^p \le \int_\Omega (v^*+1)^\frac{2p}{\alpha}.\]     Taking $ \alpha= \frac{4}{5}$ and using the $ L^{3_-}$ bound on $ v$ shows that $ f'(u^*) g(v^*) \in  L^{ \frac{6}{5}_-}(\Omega)$.   By a symmetry argument we also have $ f(u^*) g'(v^*) \in  L^{ \frac{6}{5}_-}(\Omega)$.

  By elliptic regularity we have $ u^*,v^* \in W^{2,\frac{6}{5}_-}$ and this is contained in $L^{6_-}(\Omega)$ after considering the Sobolev imbedding theorem.   Using these estimates and again using the domain decomposition $ \Omega_1$ and $ \Omega_2$ but taking $ \alpha=\frac{1}{2}$ gives that $ f'(u^*) g(v^*) \in L^{\frac{3}{2}_-}(\Omega)$ and by symmetry we have the same for $ f(u^*)g'(v^*)$.  Elliptic regularity now shows that $ u^*,v^* \in W^{2,\frac{3}{2}_-}$ and this is contained in $L^p$ for any $ p <\infty$.    One last iteration with $ \alpha = \frac{1}{4}$ shows that $ f'(u^*) g(v^*) \in L^\frac{7}{4}(\Omega)$ and after considering elliptic regularity and the Sobolev imbedding we have $u^*$ is bounded.  By symmetry we see $v^*$ is also bounded.

\hfill $ \Box$

\noindent\textbf{Proof of Theorem \ref{pppp}.}    Let $ (\lambda^*,\gamma^*) \in \Upsilon$ and let $(u^*,v^*)$ denote the extremal solution associated with $ (H)_{\lambda^*,\gamma^*}$.  We  let $ (u,v)$ denote a minimal solution on the ray $ \Gamma_\sigma$ where $ \sigma= \frac{\gamma^*}{\lambda^*}$.   Without loss of generality we can assume $ \lambda=\sigma=1$ to simplify the calculations.
  Note the assumption on $ f''$
 shows  there is some $ \xi>0$ such that $  \frac{f(u)}{u^{1+\xi}} \rightarrow \infty$ as $ u \rightarrow \infty$. Using this along with the fact that $ q>1$ and Remark \ref{rem1} shows that      $ \Delta u^*, \Delta v^*  \in L^1(\Omega)$.
Set $ \alpha:=f(u)-1$ and $ \beta:=(v+1)^t-1$ where $ 1 < t < t_+(q):=q+\sqrt{q(q-1)}$ into the stability inequality to obtain
 \begin{eqnarray*}
 && ( q - \frac{t^2}{2t-1} ) \int f'(u) (v+1)^{2t+q-1} +  \int (f(u)-1) f''(u) | \nabla u|^2
 \\&&+  2 \sqrt{q (q -1)} \int \sqrt{ f(u) f''(u)} (v+1)^{q-1} (f(u)-1) ( (v+1)^t-1)
 \\ & \le & 2 q \int f(u) f'(u) (v+1)^{q-1} + 2 q \int f'(u) (v+1)^{q +t-1}
 \end{eqnarray*}
   We label these integrals as $ I_i$ for $ 1 \le i \le 5$ from left to right.   The condition on $t$ ensures the coefficient in front $I_1$ is positive.  We can rewrite
\[ I_2 = \int q h_1(u) f(u) (v+1)^{q-1}, \qquad \mbox{ where \; \; \; \; } h_1(u)=\int_0^u (f(\tau)-1) f''(\tau) d \tau.\]
One easily sees that $ \frac{h_1(u)}{f'(u)} \rightarrow \infty$ as $ u \rightarrow \infty$.  Let $T$ be sufficiently large such that $ h_1(u) \ge 10 f'(u)$ for all $ u \ge T$.  Then one easily sees that
\[ \int f(u) f'(u) (v+1)^{q-1} \le \frac{1}{10} \int h_1(u) f(u) (v+1)^{q-1} + f'(T) \int f(u) (v+1)^{q-1}.\] We also have
\[ \int f'(u) (v+1)^{q+t-1} \le T^{q +t-1} \int f'(u) + \frac{1}{T^t} \int f'(u) (v+1)^{q +2t-1},\]  and so after combining the estimates we have
 \begin{eqnarray*}
&& \left( q - \frac{t^2}{2t-1} - \frac{2 q}{T^t} \right) \int f'(u) (v+1)^{q +2t-1}+ \frac{4 q}{5} \int h_1(u) f(u) (v+1)^{q-1}  \\&&+2 \sqrt{q (q -1)} \int \sqrt{ f(u) f''(u)} (v+1)^{q-1} (f(u)-1) ( (v+1)^t-1)
\\ &\le&  2 q f'(T) \int f(u) (v+1)^{q-1} + 2 q T^{q +t-1} \int f'(u).
 \end{eqnarray*}
    Passing to the limit shows this inequality holds with $ (u^*,v^*)$ in place of $ (u,v)$.    But these last two integrals are finite and hence we have an estimate provided the first coefficient is positive,  which is indeed the case provided we take $ T$ bigger if necessary.     Hence each of the following integrals is finite
\[ (i) \; \;  \int f'(u^*) (v^*+1)^{q +2t-1}, \quad (ii) \; \;  \int f'(u^*) f(u^*) (v^*+1)^{q-1}, \quad (iii) \; \; \int f(u^*)^\frac{3}{2} (v^*+1)^{q +t-1},\] for all $ 1 < t < t_+(q)$.

Now note that  $ -\Delta (f(u)) = - f''(u) | \nabla u|^2 + \lambda q f'(u) f(u) (v+1)^{q-1}$ and since $f$ is convex and since $ f'(u) f(u) (v+1)^{q-1}$ is uniformly bounded in $L^1(\Omega)$ along the ray $ \Gamma_\sigma$ shows that $ f(u) $ is uniformly bounded in $L^{ \frac{N}{N-2}_-} = L^{3_-}$; and hence $ f(u^*) \in L^{3_-}$.

  We now use (i) and (iii) to show that $u^*$ is bounded. To do so, set $0<\epsilon<3/2$ define $$\tau>\max\left\{ \frac{3}{2\epsilon}, \frac{3}{3/2-\epsilon} \right\} >1,  \ \ \alpha:=  \frac{3(\tau-1)}{2\tau}>0 \ \  \text{and} \ \ p=\frac{3}{2}+\epsilon.$$
From the definition of $\alpha$,  we have $\tau'\alpha=\frac{3}{2}$, where $\tau'$ and $\tau$ are conjugates meaning that $\frac{1}{\tau}+\frac{1}{\tau'}=1$.  It is straightforward to see that $(p-\alpha)\tau<3$ since $\tau > \frac{3}{2\epsilon}$,  and also $p\tau'<3$ since $\tau>\frac{3}{3/2-\epsilon}$. Elementary calculations show that the function $L(q):=1+\frac{t_+(q)}{q-1}$ is decreasing in $q>1$ and $L(q)>3$ for $q>1$.    As a result, $p\tau'<L(q)=1+\frac{t_+(q)}{q-1}$.  Therefore, $(q -1) p \tau'<  q  + t_+(q)-1$. From the H\"{o}lder's inequality we have
   \begin{eqnarray*}
\int f(u^*)^p (v^*+1)^{(q-1)p} &=&   \int f(u^*)^{p-\alpha} f(u^*)^{\alpha} (v^*+1)^{(q-1)p}
\\&\le&  \left( \int f(u^*)^{(p-\alpha) \tau} \right)^\frac{1}{\tau} \left( \int f(u^*)^\frac{3}{2} (v^*+1)^{ \tau' p (q-1) } \right)^\frac{1}{\tau'}
\end{eqnarray*}
 but the right hand side is finite and hence by elliptic regularity we have $ u^*$ is bounded.
  We now show that $v^*$ is bounded.   First note that we have  $ \int (v^*+1)^{2t+q-1} < \infty$ for any $ 1 < t < t_+(q)$ and hence one has $ \int (v^*+1)^{q +1} < \infty$.  Since $u^*$ is bounded this shows that $ v^* \in H_0^1(\Omega)$.   Now to complete the proof of $ v^*$ being bounded it is sufficient to show (since $u^*$ is bounded) that  $ (v^*+1) \in L^{(q-1)p}(\Omega)$ for some $ p >\frac{3}{2}$ but this easily follows after considering the above estimate.

\hfill $ \Box$

\noindent\textbf{Proof of Theorem \ref{radial}.}
  Step 1.   Let $(u,v)$ denote a smooth minimal solution of $(G)_{\lambda,\gamma}$ on the ray $ \Gamma_\sigma$ where $ \sigma:=\frac{\gamma^*}{\lambda^*}$.
Then taking a derivative of $(G)_{\lambda,\gamma}$ with respect to $r$ gives
 \begin{eqnarray}\label{pderadial}
\left\{ \begin{array}{lcl}
\hfill   -\Delta u_r +\frac{N-1}{r^2} u_r   &=& \lambda f''(u) g(v) u_r +\lambda f'(u)g'(v) v_r \qquad \text{for} \qquad 0<r<1, \\
\hfill -\Delta v_r+\frac{N-1}{r^2} v_r &=& \gamma f'(u) g'(v) u_r +\gamma f(u)g''(v) v_r   \qquad \text{for} \qquad  0<r<1.
\end{array}\right.
  \end{eqnarray}
Multiply the first and the second equations of (\ref{pderadial}) with $u_r \phi^2$ and $v_r \phi^2$ where  $ \phi  \in C^{0,1}(B_1) \cap H_0^1(B_1)$ gives
\begin{eqnarray}\label{integralradial}
\nonumber\int  |\nabla u_r|^2\phi^2 +\frac{1}{2} \nabla u_r^2\cdot\nabla\phi^2 +\frac{N-1}{r^2} u_r^2\phi^2   &=& \int \lambda f''(u) g(v) u_r^2\phi^2 +\lambda f'(u)g'(v) v_ru_r\phi^2  \\
\int  |\nabla v_r|^2\phi^2 +\frac{1}{2} \nabla v_r^2\cdot\nabla\phi^2 +\frac{N-1}{r^2} v_r^2\phi^2  &=& \int \gamma f'(u) g'(v) u_r v_r\phi^2+\gamma f(u)g''(v) v_r^2\phi^2
  \end{eqnarray}
  On the other hand, testing (\ref{gra}) on $\phi \to u_r\phi$ and $\psi \to v_r\phi$ where  $\phi$ is as above, we get
  \begin{eqnarray*}
\int f''(u)g(v) u_r^2\phi^2 +\int f(u)g''(v) v_r^2\phi^2+2 \int f'(u)g'(v)u_r v_r\phi^2\le \frac{1}{\lambda} \int |\nabla(u_r\phi)|^2+\frac{1}{\gamma} \int |\nabla(v_r\phi)|^2.
  \end{eqnarray*}
Expanding the right hand side we get
  \begin{eqnarray*}
\frac{1}{\lambda} \int  \left(|\nabla u_r|^2\phi^2+ u_r^2|\nabla \phi|^2 +\frac{1}{2} \nabla \phi^2\cdot\nabla u_r^2\right)  +\frac{1}{\gamma} \int \left( |\nabla v_r|^2\phi^2+ v_r^2|\nabla \phi|^2 +\frac{1}{2} \nabla \phi^2\cdot\nabla v_r^2 \right)
  \end{eqnarray*}
  Applying (\ref{integralradial}) the above will be
  \begin{eqnarray*}
&&\frac{1}{\lambda} \int   u_r^2|\nabla \phi|^2  + \frac{1}{\gamma} \int v_r^2|\nabla \phi|^2  -  \frac{N-1}{\lambda}\int u_r^2 \frac{\phi^2}{r^2}-  \frac{N-1}{\gamma}\int v_r^2 \frac{\phi^2}{r^2} \\&& + 2\int f'(u) g'(v) u_r v_r\phi^2+ \int f(u)g''(v) v_r^2\phi^2 +\int   f''(u) g(v) u_r^2\phi^2 .
  \end{eqnarray*}
 Therefore one obtains, after substituting $ r \phi$ for $ \phi$,
  \begin{equation}\label{radialstab}
  (N-1) \int \left( \frac{u_r^2}{\lambda} +\frac{v_r^2}{\gamma}  \right)\phi^2 \le \int \left( \frac{u_r^2}{\lambda} +\frac{v_r^2}{\gamma}  \right) |\nabla(r\phi)|^2,
  \end{equation}
   for all $ \phi \in C^{0,1}(B_1) \cap H_0^1(B_1)$.  Note that there is no $f$ and $g$ in this estimate.

   Step 2.    We show that (\ref{radialstab}) implies that $u^*,v^* \in H_0^1(B_1)$.  Firstly we argue there is some $ C_R>0$ such that for any $ 0<R<1$ we have $ \sup_{r>R} (u(r)+v(r)) \le C_R$ and $C_R$ is independent of $ \lambda$ (and hence the estimate also holds for $ u^*,v^*$).   To see this we first note that by Remark \ref{rem1} we have $ \| u\|_{L^1}, \|v\|_{L^1} \le C$ (uniformly in $ \lambda$) and since $u,v$ are radially decreasing we have the desired result otherwise we couldn't have the $L^1$ bound.     We now let $ 0 \le \phi \le 1$ be a smooth function supported in $ B_1$ with $ \phi=1$ on $B_\frac{1}{2}$.  Putting this into (\ref{radialstab}) and rearranging gives
   \begin{equation} \label{mos}
   (N-2) \int_{B_\frac{1}{2}} \frac{u_r^2}{\lambda} + \frac{v_r^2}{\gamma} \le C \int_{B_1 \backslash B_\frac{1}{2}} \frac{u_r^2}{\lambda} + \frac{v_r^2}{\gamma}.
   \end{equation}
    Now let $0 \le \psi \le 1$ denote  smooth function with $ \psi=0$ in $ B_\frac{1}{4}$ and $ \psi=1$ for $ |x|>\frac{1}{2}$.  Multiply $-\Delta u = \lambda f'(u) g(v)$ by $ u \psi^2$ and integrate by parts and use Young's inequality to arrive at
   \[ \int | \nabla u|^2 \psi^2 \le 2 \lambda \int f'(u) g(v) u \psi^2 + 4 \int u^2 | \nabla \psi|^2,\] and hence we have
   \begin{equation} \label{most}
   \int_{B_1 \backslash B_\frac{1}{2}} | \nabla u|^2  \le 2 \lambda \int_{B_1 \backslash B_\frac{1}{4}} f'(u) g(v) u + C \int_{B_1 \backslash B_\frac{1}{4}} u^2
   \end{equation} and we now use the pointwise bound to see that
   \[ \int_{B_1 \backslash B_\frac{1}{2}} | \nabla u|^2  \le C,\] where $C$ is independant of $ \lambda$.  Similarly we obtain the same estimate of $v$ and combining this with (\ref{mos}) we see that $ u,v$ are bounded in $H_0^1(B_1)$ indapendant of $ \lambda$ and hence $ u^*,v^* \in H_0^1(B_1)$.

Step 3.  Let $ (u,v)$  denote a minimal solution of $(G)_{\lambda,\gamma}$  on the $\Gamma_\sigma$ where $ \sigma:=\frac{\gamma^*}{\lambda^*}$.    For $0<r < \frac{1}{2}$ define $\phi$  to be the following test function
  \begin{eqnarray*}
\phi(t)= \left\{ \begin{array}{lcl}
\hfill   r^{-\sqrt{N-1}-1}   \quad && \text{if}\  0\le t\le r,\\
\hfill t^{-\sqrt{N-1}-1}   \quad && \text{if}\  r< t\le 1/2, \\
\hfill 2^{\sqrt{N-1}+2} (1-t)  \quad && \text{if}\  1/2< t\le 1.
\end{array}\right.
  \end{eqnarray*}  Putting $ \phi$ into  (\ref{radialstab}) gives
 \begin{eqnarray*}
 \int_{0}^{r} \left( \frac{u_r^2(t)}{\lambda} +\frac{v_r^2(t)}{\gamma}  \right) t^{N-1} dt \le C_N r^{2\sqrt{N-1}+2} \left( \frac{1}{\lambda} ||\nabla u||^2_{L^2(B_1\setminus \overline{B_{1/2}})} + \frac{1}{\gamma} ||\nabla v||^2_{L^2(B_1\setminus \overline{B_{1/2}})} \right),
 \end{eqnarray*} for all $ 0 <r< \frac{1}{2}$ and one easily extends this to all $ 0 <r<1$ by taking $ C_N$ bigger if necesary.
From this,  by simple calculations we get
  \begin{eqnarray*}
 \frac{1}{\sqrt{\lambda}} |u(r)-u(\frac{r}{2})|+\frac{1}{\sqrt\gamma} |v(r)-v(\frac{r}{2})| &\le& \int_{r/2}^{r}\left(\frac{1}{\sqrt\lambda}  |u_r(t)| + \frac{1}{\sqrt\gamma} |v_r(t)|\right) t^{\frac{N-1}{2}} t^{\frac{1-N}{2}} dt\\&\le&
\sqrt 2 \left(  \int_{r/2}^{r} \left( \frac{u_r^2(t)}{\lambda} +\frac{v_r^2(t)}{\gamma}  \right) t^{N-1} dt  \right)^{1/2} \left( \int_{r/2}^{r} t^{1-N} dt  \right)^{1/2}\\
& \le& C_N r^{\sqrt{N-1}+2-\frac{N}{2}} \left( \frac{1}{\sqrt\lambda} ||\nabla u||_{L^2(B_1\setminus \overline{B_{1/2}})} + \frac{1}{\sqrt\gamma} ||\nabla v||_{L^2(B_1\setminus \overline{B_{1/2}})} \right).
  \end{eqnarray*}
Let $0<r \le 1$. Then, there exist $m\in \mathbb N$ and $1/2<r_1\le 1$ such that $r =\frac{r_1}{2^{m-1}}$. Since $u,v$ are radial, we have $u(r_1)\le ||u||_{L^\infty(B_1\setminus \overline{B_{1/2}})} \le C_N ||u||_{H^1(B_1\setminus \overline{B_{1/2}})} $ and $v(r_1)\le ||v||_{L^\infty(B_1\setminus \overline{B_{1/2}})} \le C_N ||v||_{H^1(B_1\setminus \overline{B_{1/2}})} $.
\begin{eqnarray*}
\frac{1}{\sqrt{\lambda}} |u(r)| +\frac{1}{\sqrt{\gamma}} |v(r)| &\le& \frac{1}{\sqrt{\lambda}} |u(r_1)-u(r)|+\frac{1}{\sqrt{\gamma}} |v(r_1)-v(r)| +\frac{1}{\sqrt{\lambda}} |u(r_1)| + \frac{1}{\sqrt{\gamma}} |v(r_1)| \\&\le& \frac{1}{\sqrt{\lambda}} \sum_{i=1}^{m-1} \left|  u\left(\frac{r_1}{2^{i-1}}\right)-u\left( \frac{r_1}{2^i} \right) \right|+ \frac{1}{\sqrt{\gamma}} \sum_{i=1}^{m-1} \left|  v\left(\frac{r_1}{2^{i-1}}\right)-v\left( \frac{r_1}{2^i} \right) \right|
\\&&+ \frac{C_N}{\sqrt{\lambda}} ||u||_{H^1(B_1\setminus \overline{B_{1/2}})} + \frac{C_N}{\sqrt{\gamma}} ||v||_{H^1(B_1\setminus \overline{B_{1/2}})}
\\&\le& C_N \sum_{i=1}^{m-1} \left(\frac{r_1}{2^{i-1}}\right)^{-N/2+\sqrt{N-1} +2} \left( \frac{1}{\sqrt\lambda} ||\nabla u||_{L^2(B_1\setminus \overline{B_{1/2}})} + \frac{1}{\sqrt\gamma} ||\nabla v||_{L^2(B_1\setminus \overline{B_{1/2}})} \right)\\&&+ \frac{C_N}{\sqrt{\lambda}} ||u||_{H^1(B_1\setminus \overline{B_{1/2}})} + \frac{C_N}{\sqrt{\gamma}} ||v||_{H^1(B_1\setminus \overline{B_{1/2}})} \\&\le& C_N \left(  \sum_{i=1}^{m-1} \left(\frac{r_1}{2^{i-1}}\right)^{-N/2+\sqrt{N-1} +2} +1     \right) \left(\frac{1}{\sqrt{\lambda}} ||u||_{H^1(B_1\setminus \overline{B_{1/2}})} + \frac{1}{\sqrt{\gamma}} ||v||_{H^1(B_1\setminus \overline{B_{1/2}})} \right)
   \end{eqnarray*}
Note that the sign of ${\sqrt{N-1}+2-\frac{N}{2}}$ is crucial in getting estimates. Since  $\sqrt{N-1}+2-\frac{N}{2}=0$ if and only if $N=10$, this dimension is the critical dimension.  From the above, for any $0<r\le 1$ we get if $2\le N<10$,
 \begin{eqnarray*}
\frac{1}{\sqrt{\lambda}} |u(r)| +\frac{1}{\sqrt{\gamma}} |v(r)| \le C_N \left( \frac{1}{\sqrt\lambda} ||u||_{H^1(B_1\setminus \overline{B_{1/2}})} +\frac{1}{\sqrt\gamma} ||v||_{H^1(B_1\setminus \overline{B_{1/2}})} \right),
   \end{eqnarray*}
if $N=10$,
\begin{eqnarray*}
\frac{1}{\sqrt{\lambda}} |u(r)| +\frac{1}{\sqrt{\gamma}} |v(r)|  \le C_N (1+|\log r|) \left( \frac{1}{\sqrt\lambda} ||u||_{H^1(B_1\setminus \overline{B_{1/2}})} +\frac{1}{\sqrt\gamma} ||v||_{H^1(B_1\setminus \overline{B_{1/2}})} \right),
   \end{eqnarray*}
   if $N>10$,
\begin{eqnarray*}
 \frac{1}{\sqrt{\lambda}} |u(r)| +\frac{1}{\sqrt{\gamma}} |v(r)| \le C_N r^{-\frac{N}{2} +\sqrt{N-1}+2}  \left( \frac{1}{\sqrt\lambda} ||u||_{H^1(B_1\setminus \overline{B_{1/2}})} +\frac{1}{\sqrt\gamma} ||v||_{H^1(B_1\setminus \overline{B_{1/2}})} \right).   \end{eqnarray*}  Passing to limits we obtain the desired estimates for $ u^*,v^*$.

\hfill $ \Box$

 \section{Explicit nonlinearities}
 We now examine the case of polynomial nonlinearities and for this we recall the definition $ t_+(p)=  p+ \sqrt{p(p-1)}$
 and note that $ t_+$ is increasing on $[1,\infty)$.   We begin with the gradient system.
\begin{thm} \label{grade} Let $ f(u)=(u+1)^p,  g(v)=(v+1)^q$ and suppose $ p,q>2$.
\begin{enumerate} \item Let $(\lambda^*,\gamma^*) \in \Upsilon$.  Then the associated extremal solution of  $ (G)_{\lambda^*,\gamma^*}$ is bounded provided
\begin{equation} \label{gradp} \frac{N}{2} < 1 + \frac{2}{p+q-2} \max\{ t_+(p-1),t_+(q-1)\}.
\end{equation}

\item Let $(\lambda^*,\gamma^*) \in \Upsilon$ and define $ I_{p,q,\lambda,\gamma}(t) := p+q-1-\frac{t^2}{2t-1}+\frac{2pq}{p+q} \left(  (\frac{\gamma q}{\lambda p})^{t+q-1} -1  \right)$.  Let  $ t_0:= \max\{ t_+(p-1),t_+(q-1)\}$ and
suppose that  $ I_{p,q,\lambda^*,\gamma^*}(t_0)>0$ and $ I_{q,p,\gamma^*,\lambda^*}(t_0)>0$.   The map $ t \mapsto \min\{I_{p,q,\lambda^*,\gamma^*}(t), I_{q,p,\gamma^*,\lambda^*}(t) \}$
is decreasing and has a root in $(t_0,\infty)$, which we denote by $T$.  Suppose
\begin{equation} \label{zaza}
\frac{N}{2} < 1+\frac{2}{p+q-2} T.
\end{equation}    Then the associated extremal solution of  $ (G)_{\lambda^*,\gamma^*}$ is bounded.
\end{enumerate}
\end{thm}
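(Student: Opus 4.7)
The plan is to apply the stability inequality (\ref{gra}) to minimal solutions $(u,v)$ along the ray $\Gamma_\sigma$ with $\sigma=\gamma^*/\lambda^*$, test against $\phi=(u+1)^a-1$ and $\psi=(v+1)^b-1$ (or $\psi=(v+1)^t-1$ in Part (2)), and bootstrap the resulting integral estimates to $L^\infty$ via elliptic regularity and Sobolev embedding. The starting point is the $L^1$ bound supplied by Remark~\ref{rem1}, and the limit $\lambda\nearrow\lambda^*$ is taken at the end.

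For the leading-order algebra, the expansions $\phi^2\sim(u+1)^{2a}$, $\psi^2\sim(v+1)^{2b}$ reduce the left side of (\ref{gra}) to $p(p-1)X+q(q-1)Y+2pq\,Z$, while integration by parts against $-\Delta u=\lambda p(u+1)^{p-1}(v+1)^q$ and $-\Delta v=\gamma q(u+1)^p(v+1)^{q-1}$ gives $\int(u+1)^{2a-2}|\nabla u|^2=\frac{\lambda p}{2a-1}X$ and $\int(v+1)^{2b-2}|\nabla v|^2=\frac{\gamma q}{2b-1}Y$. Here
\[
X=\int(u+1)^{p+2a-2}(v+1)^q,\quad Y=\int(u+1)^p(v+1)^{q+2b-2},\quad Z=\int(u+1)^{p+a-1}(v+1)^{q+b-1}.
\]
The inequality (\ref{gra}) collapses to
\begin{equation*}
\Bigl(p(p-1)-\tfrac{a^2 p}{2a-1}\Bigr)X+\Bigl(q(q-1)-\tfrac{b^2 q}{2b-1}\Bigr)Y+2pq\,Z\le C_{\mathrm{low}},
\end{equation*}
where $C_{\mathrm{low}}$ absorbs lower-order terms from the binomial expansion of $\phi^2,\psi^2$, handled by induction on previously known integrability.

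For Part (1), choose $a\in(1,t_+(p-1))$ and $b\in(1,t_+(q-1))$ so both bracketed coefficients are strictly positive; the inequality then yields uniform bounds on $X,Y,Z$ along $\Gamma_\sigma$. Dropping the factors $(v+1)^q\ge1$ and $(u+1)^p\ge1$ provides individual marginals $u\in L^{A-}$, $v\in L^{B-}$ with $A=p-2+2t_+(p-1)$, $B=q-2+2t_+(q-1)$. A H\"older/Young splitting---writing $(u+1)^{(p-1)s}(v+1)^{qs}=(u+1)^{(p-1)s}(v+1)^q\cdot(v+1)^{q(s-1)}$ and balancing between the joint bound $X$ and the marginal bound on $v$ (or symmetrically in the other direction)---pushes $\int\bigl((u+1)^{p-1}(v+1)^q\bigr)^s$ up to the threshold encoded by (\ref{gradp}). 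Elliptic regularity on $-\Delta u=\lambda p(u+1)^{p-1}(v+1)^q$ combined with $W^{2,s}\hookrightarrow L^\infty$ for $s>N/2$ then gives $u^*\in L^\infty$; swapping the roles of $p,q$ and the equations gives $v^*\in L^\infty$.

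Part (2) is a refinement obtained by inserting the weighted variant $\phi=c_1((u+1)^a-1)$, $\psi=c_2((v+1)^t-1)$ and optimizing the scaling $r=c_2/c_1$ so as to extract the maximum from the cross term $2rpq\,Z$ against the right-hand quadratic form. The key point is that both $(u+1)^{p-1}(v+1)^q$ and $(u+1)^p(v+1)^{q-1}$ appear in the system, and the optimal $r$ works out to a power of the ratio $\gamma q/\lambda p$, producing precisely the correction $\frac{2pq}{p+q}\bigl((\gamma q/\lambda p)^{t+q-1}-1\bigr)$ and hence $I_{p,q,\lambda^*,\gamma^*}(t)$ as the effective coefficient controlling admissibility. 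The hypotheses $I_{p,q,\lambda^*,\gamma^*}(t_0)>0$ and $I_{q,p,\gamma^*,\lambda^*}(t_0)>0$ allow $t$ to be taken beyond $t_0$ up to the root $T$ of $\min\{I_{p,q,\lambda^*,\gamma^*},I_{q,p,\gamma^*,\lambda^*}\}$, whose monotonicity follows from differentiating the simple explicit form. Repeating the H\"older/Sobolev bootstrap of Part (1) with this enlarged range yields the sharper condition (\ref{zaza}). The principal obstacle is identifying the correct weighted scaling $r$ that produces the sharp quadratic $I_{p,q,\lambda,\gamma}$ and verifying the monotonicity of $t\mapsto\min\{I_{p,q,\lambda^*,\gamma^*}(t),I_{q,p,\gamma^*,\lambda^*}(t)\}$; a secondary but non-trivial difficulty is controlling the lower-order error terms $C_{\mathrm{low}}$ throughout the iteration so that the leading bounds survive the passage $\lambda\nearrow\lambda^*$.
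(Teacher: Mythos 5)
Your opening matches the paper: testing (\ref{gra}) with $\phi=(u+1)^a-1$, $\psi=(v+1)^b-1$ along $\Gamma_\sigma$ and integrating by parts against the equations is exactly the paper's Lemma \ref{stabpol}, with the same leading coefficients. The gap is in the endgame of Part (1), and it traces back to one structural ingredient you never invoke: the pointwise comparison $v\le u\le \frac{\lambda p}{\gamma q}\,v$ of Lemma \ref{pointwise}. Your claim that a H\"older/Young splitting of $X$, $Y$, $Z$ and the marginals yields $(u+1)^{p-1}(v+1)^q\in L^{s}$ for some $s>N/2$ under (\ref{gradp}) fails by a degree count: with $t_0=\max\{t_+(p-1),t_+(q-1)\}$, every integral you control has total degree in $(u+1,v+1)$ strictly below $p+q-2+2t_0$, and H\"older interpolation cannot produce an exponent pair outside the convex hull of the available ones, whereas your target $\bigl((p-1)s,qs\bigr)$ with $s$ near $\frac{p+q-2+2t_0}{p+q-2}$ has total degree $(p+q-1)s>p+q-2+2t_0$. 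Even if you add the comparison $u\sim v$, the one-shot ``nonlinearity in $L^{N/2+}$ plus $W^{2,s}\hookrightarrow L^\infty$'' scheme only works when $\frac N2<\frac{p+q-2+2t_0}{p+q-1}$, strictly weaker than (\ref{gradp}) (for $p=q=3$ it gives $N\le4$ versus the claimed $N\le5$). The paper gains the missing power by keeping $u^*$ linear: it writes $-\Delta u^*=\lambda^*c(x)\,u^*+\lambda^*p\,(v^*+1)^q$ with $0\le c(x)\le C(u^*+1)^{p+q-2}$ (this uses $v^*\le u^*$), so only $c\in L^{T}$ for some $T>\frac N2$ is needed, i.e.\ $(p+q-2)\frac N2<2t+p+q-2$, which is exactly (\ref{gradp}); an iterative bootstrap would also do, but either way you must first use $u\le\frac{\lambda p}{\gamma q}v$ to convert the mixed bound $\int(u+1)^{2t+p-2}(v+1)^q\le C$ into $\int(u+1)^{2t+p+q-2}\le C$. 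The comparison is also needed for the second component: since (\ref{gradp}) involves only the larger of $t_+(p-1),t_+(q-1)$, your ``swap $p$ and $q$'' symmetry does not bound the other unknown; the paper deduces $v^*\in L^\infty$ from $u^*\in L^\infty$ via $v^*\le u^*$.

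In Part (2) your proposed mechanism is also not the one that works: rescaling $\phi\to c_1\phi$, $\psi\to c_2\psi$ only multiplies terms of (\ref{gra}) by $c_1^2$, $c_2^2$, $c_1c_2$, and optimizing $r=c_2/c_1$ cannot generate a correction whose exponent $t+q-1$ tracks the $v$-exponent of the cross term. In the paper one takes $s=t\in(t_0,T)$ in Lemma \ref{stabpol}, notes that the first two coefficients are then negative while the cross term carries the positive coefficient $2pq$, and uses the pointwise inequalities $v\le u$ and $v+1\ge\frac{\gamma q}{\lambda p}(u+1)$ (after fixing the sign of $\lambda^*p-\gamma^*q$) to replace $u$ by $v$ in the negative terms and to bound the cross term below by $\bigl(\frac{\gamma q}{\lambda p}\bigr)^{t+q-1}\int(1+u)^{2t+p+q-2}$; that is precisely where the factor in $I_{p,q,\lambda,\gamma}$ comes from, yielding $I_{p,q,\lambda^*,\gamma^*}(t)\int(1+u)^{2t+p+q-2}\le C\int(1+u)^{t+p+q-2}$, after which one concludes as in Part (1) via the linear-potential step. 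So both parts hinge on Lemma \ref{pointwise} together with the ``$c(x)u^*$'' regularity device; as written, your argument proves only a strictly smaller range of dimensions and leaves the second component unbounded.
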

\begin{remark}  Note that the condition on $ t_0$ from the second part of Theorem  \ref{grade} is really a condition on how close the parameters $ (\lambda^*,\gamma^*)$ are to the ``diagonal'' given by $ \lambda^* p = \gamma^* q$.   On the diagonal one trivially sees the condition is satisfied.  Some algebra shows that the condition is satisfired  provided $ (\lambda^*,\gamma^*)$ lie within the cone
\begin{eqnarray*} \left( 1- \frac{p+q}{2pq} \min(p,q) \right)^\frac{1}{t_0+q-1}  < \frac{\gamma^* q}{\lambda^*p}  < \left( 1- \frac{p+q}{2pq} \min(p,q) \right)^{-\frac{1}{t_0+p-1}}.
\end{eqnarray*}
 \end{remark}

\begin{thm} \label{hamil}    Let $ f(u)=(u+1)^p$, $ g(v):=(v+1)^q$ with $ p,q>1$. \item   Suppose
\[ N < \min \left\{ 4 + \frac{2}{p-1} + 2 \sqrt{ \frac{p}{p-1}},  4 + \frac{2}{q-1} + 2 \sqrt{ \frac{q}{q-1}} \right\},\]
and $(\lambda^*,\gamma^*) \in \Upsilon$.  Then the associated extremal solution of  $ (H)_{\lambda^*,\gamma^*}$ is bounded.
\end{thm}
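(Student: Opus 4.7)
The plan is to emulate the proof of Theorem \ref{pppp} but let the exponent on $u+1$ vary instead of being fixed at $p$. Let $(u,v)$ denote a smooth minimal solution of $(H)_{\lambda,\gamma}$ along the ray $\Gamma_\sigma$ with $\sigma = \gamma^*/\lambda^*$, and normalize (WLOG) $\lambda = \sigma = 1$. First I would plug $\phi = (u+1)^s - 1$ and $\psi = (v+1)^t - 1$, with $s \in (\tfrac{1}{2}, t_+(p))$ and $t \in (\tfrac{1}{2}, t_+(q))$, into the Hamiltonian stability inequality (\ref{twist}). For the explicit nonlinearities, $\sqrt{ff''gg''} = \sqrt{pq(p-1)(q-1)}\,(u+1)^{p-1}(v+1)^{q-1}$ and $f'g' = pq(u+1)^{p-1}(v+1)^{q-1}$, so the left-hand side of (\ref{twist}) is the single weight $(u+1)^{p-1}(v+1)^{q-1}$ multiplied by a positive quadratic form in $(\phi,\psi)$. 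For the right-hand side, integration by parts together with the equation $-\Delta u = q(u+1)^p(v+1)^{q-1}$ yields
$$\int|\nabla\phi|^2 = \frac{qs^2}{2s-1}\left[\int(u+1)^{p-1+2s}(v+1)^{q-1} - \int(u+1)^p(v+1)^{q-1}\right],$$
and symmetrically for $\int|\nabla\psi|^2$.

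Expanding the squares and cross product and collecting the highest-order terms I expect to arrive at an inequality of the form
$$\left(pq - \frac{qs^2}{2s-1}\right)X + \left(pq - \frac{pt^2}{2t-1}\right)Y + 2\sqrt{pq(p-1)(q-1)}\,Z \le \mathcal{L},$$
where $X = \int(u+1)^{p-1+2s}(v+1)^{q-1}$, $Y = \int(u+1)^{p-1}(v+1)^{q-1+2t}$, $Z = \int(u+1)^{p-1+s}(v+1)^{q-1+t}$, and $\mathcal{L}$ collects strictly lower-order integrals of the form $\int(u+1)^{p-1+s}(v+1)^{q-1}$, $\int(u+1)^p(v+1)^{q-1}$, etc. The quadratic coefficients of $X$ and $Y$ are positive exactly when $s < t_+(p)$ and $t < t_+(q)$, and the cross term $2\sqrt{pq(p-1)(q-1)}Z$ is nonnegative and may be dropped. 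The terms in $\mathcal{L}$ I would absorb using Young's inequality combined with the $L^1$ estimates $\int f(u)g'(v)\delta, \int f'(u)g(v)\delta \le C$ from Remark \ref{rem1} (invoking a preliminary bootstrap if needed to convert weighted to unweighted integrals). Passing to the limit $\lambda \nearrow \lambda^*$ yields, for every $s < t_+(p)$ and $t < t_+(q)$,
$$\int(u^*+1)^{p-1+2s}(v^*+1)^{q-1} + \int(u^*+1)^{p-1}(v^*+1)^{q-1+2t} < \infty.$$

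To promote this to an $L^\infty$ bound I would apply Hölder's inequality to interpolate between these two estimates, giving $\int(u^*+1)^{p-1+2s\theta}(v^*+1)^{q-1+2t(1-\theta)} < \infty$ for every $\theta \in [0,1]$, and then seek $r > N/2$ such that $(u^*+1)^p(v^*+1)^{q-1} \in L^r$; elliptic regularity and the Morrey embedding then give $u^* \in L^\infty$, and symmetrically for $v^*$ using $(u^*+1)^{p-1}(v^*+1)^q$. If the one-shot interpolation does not already reach $r > N/2$ at the threshold stated in the theorem, the plan is to bootstrap: use the $L^m$ bounds coming from $u^*, v^* \in W^{2,r}$ with $r\le N/2$ to enlarge the admissible region of exponents $(\alpha,\beta)$ for which $\int(u^*+1)^\alpha(v^*+1)^\beta < \infty$, iterate elliptic regularity finitely many times, and conclude.

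The hard part I anticipate is matching the iteration to the sharp dimensional threshold $N < 4 + \tfrac{2}{p-1} + 2\sqrt{p/(p-1)}$, equivalently $\tfrac{N-2}{2}(p-1) < t_+(p)$ (and its $q$-analogue). A pure one-shot Hölder argument on the raw stability bounds gives a strictly smaller admissible range for $N$, so the sharp threshold must be recovered either by retaining information from the discarded cross term $Z$ (e.g.\ noting $Z \le \sqrt{XY}$ and embedding the three leading terms into a single perfect square) or by a carefully tuned multi-step bootstrap in which each iteration raises the admissible exponents by the same fixed proportion. Working out this Joseph--Lundgren-type bookkeeping so that the iteration terminates precisely at the stated threshold is, I expect, the technical crux of the argument.
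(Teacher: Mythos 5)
Your first half is essentially the paper's Lemma \ref{shit}: plugging $\phi=(u+1)^s-1$, $\psi=(v+1)^t-1$ into (\ref{twist}), using the equations to convert $\int|\nabla\phi|^2,\int|\nabla\psi|^2$, and obtaining, for $s<t_+(p)$, $t<t_+(q)$, uniform bounds on $X=\int(u+1)^{p-1+2s}(v+1)^{q-1}$ and $Y=\int(u+1)^{p-1}(v+1)^{q-1+2t}$ (dropping the cross term is harmless, since the mixed bound $\int(u+1)^{p+s-1}(v+1)^{q+t-1}\le\sqrt{XY}$ is recovered by Cauchy--Schwarz, exactly as in the paper). The gap is in the final regularity step, and you have named it yourself without resolving it: as proposed you need the full nonlinearity $(u^*+1)^{p}(v^*+1)^{q-1}$ in $L^r$ for some $r>N/2$, and the available estimates only give this for $r<\frac{p-1+t_+(p)}{p}$, which is strictly below the stated threshold $\frac{N}{2}<\frac{p-1+t_+(p)}{p-1}$; neither "retaining the cross term $Z$" (it carries no more information than $\sqrt{XY}$) nor an unspecified multi-step bootstrap is worked out, so the proof does not close at the claimed dimensions.

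The missing idea in the paper is not an iteration but a linearization of the equation: after checking $u^*\in H_0^1$ (which follows since $p+1<p-1+2t_+(p)$), one writes
\begin{equation*}
-\Delta u^* = \lambda^* q\,\Bigl(\tfrac{(u^*+1)^p-1}{u^*}\Bigr)(v^*+1)^{q-1}\,u^* + \lambda^* q\,(v^*+1)^{q-1} =: c(x)\,u^* + h(x),
\end{equation*}
with $0\le c(x)\le C(u^*+1)^{p-1}(v^*+1)^{q-1}$, so that boundedness of $u^*$ only requires the \emph{potential} $(u^*+1)^{p-1}(v^*+1)^{q-1}$ (one power of $u^*+1$ less than in your plan) to lie in $L^r$, $r>\frac N2$. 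The mixed estimate then demands exactly $\frac N2(p-1)<p-1+t_+(p)$ and $\frac N2(q-1)<q-1+t_+(q)$, which is precisely the hypothesis $N<4+\frac{2}{p-1}+2\sqrt{p/(p-1)}$ (and its $q$-analogue); this is the same device used in the proof of Theorem \ref{grade}(1). Finally, your "symmetric" treatment of $v^*$ hits the same full-power obstruction; the paper instead assumes WLOG $\lambda^* q\ge\gamma^* p$ and invokes the pointwise comparison of Lemma \ref{pointwise}(2), $p\gamma u\ge q\lambda v$, so that boundedness of $v^*$ follows immediately from that of $u^*$ (alternatively, the same linearization applied to the $v$-equation would work, but some such argument must be supplied).
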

\begin{remark} Note that $ p \mapsto 4 + \frac{2}{p-1} + 2 \sqrt{ \frac{p}{p-1}}$ is decreasing and goes to $ 6$ as $ p \rightarrow \infty$.  Hence for $ N \le 6$ we see all extremal solutions are bounded for any $ p,q$.  As in the case of $(G)_{\lambda,\gamma}$ one can obtain better results provided they restrict the range of the parameters $ (\lambda,\gamma)$ to a  certain cone with axis given by  $\frac{\gamma}{\lambda} = \frac{q}{p}$,  we omit the details.

\end{remark}

We begin with some pointwise comparison results.
\begin{lemma}  \label{pointwise}
Let $ f(u)=(u+1)^p$ and $  g(v)=(v+1)^q$ where  $ p,q>1$.
\begin{enumerate} \item Suppose that $(u,v)$ is a smooth solution of $(G)_{\lambda,\gamma}$ where $ \lambda p\ge \gamma q$.   Then  $ v\le u \le \frac{\lambda p}{\gamma q} v$.

\item  Suppose  $(u,v)$ is the smooth minimal solution of $(H)_{\lambda,\gamma}$  where $ q \lambda \ge \gamma p$.
 Then  $ p \gamma u \ge q \lambda v$.
\end{enumerate}
 \end{lemma}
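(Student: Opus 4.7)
The plan is to handle the two parts by different techniques, reflecting the structural difference between the gradient system $(G)$ and the twisted system $(H)$.

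For part (1), I would use a direct maximum principle argument, without needing a minimal-solution structure. Writing the equations as $-\Delta u = \lambda p(u+1)^{p-1}(v+1)^q$ and $-\Delta v = \gamma q(u+1)^p(v+1)^{q-1}$, subtract to get
\[
-\Delta(v-u)=(u+1)^{p-1}(v+1)^{q-1}\bigl[\gamma q(u+1)-\lambda p(v+1)\bigr].
\]
If $v-u$ had a positive interior maximum at some $x_0$, then $-\Delta(v-u)(x_0)\ge 0$; but $v(x_0)>u(x_0)$ together with $\lambda p\ge \gamma q$ force the bracket to be strictly negative at $x_0$, a contradiction, so $u\ge v$ on $\Omega$. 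For the upper bound, set $M:=\lambda p/(\gamma q)\ge 1$; using $M\gamma q=\lambda p$ one computes
\[
-\Delta(Mv-u)=\lambda p(u+1)^{p-1}(v+1)^{q-1}(u-v)\ge 0,
\]
and the maximum principle with zero boundary data yields $Mv\ge u$.

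For part (2), the analogous direct argument fails: forming $-\Delta(v-u)$ for system $(H)$, the inequality $q\lambda\ge p\gamma$ does not force the bracket to have the right sign at a putative interior maximum of $v-u$. So I would instead use the minimal-solution assumption through its Picard construction. Set $u_0=v_0=0$ and iterate
\[
-\Delta u_{n+1}=\lambda q(u_n+1)^p(v_n+1)^{q-1},\qquad -\Delta v_{n+1}=\gamma p(u_n+1)^{p-1}(v_n+1)^q,
\]
with zero boundary values; standardly $(u_n,v_n)\uparrow(u,v)$ when $(u,v)$ is the minimal solution. I would then prove by induction that $p\gamma u_n\ge q\lambda v_n$ on $\Omega$ for every $n\ge 0$. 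The base case is trivial, and in the inductive step the computation
\[
-\Delta(p\gamma u_{n+1}-q\lambda v_{n+1})=pq\gamma\lambda(u_n+1)^{p-1}(v_n+1)^{q-1}(u_n-v_n),
\]
together with the inductive bound $p\gamma u_n\ge q\lambda v_n$ and the hypothesis $q\lambda\ge p\gamma$ (which in combination imply $u_n\ge v_n$), makes the right-hand side nonnegative; the maximum principle then yields $p\gamma u_{n+1}\ge q\lambda v_{n+1}$. Passing to the limit in $n$ concludes part (2).

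The main obstacle is recognizing that part (2) genuinely requires the iteration and the minimality assumption: the neat factorization that worked in part (1) produces the wrong sign for the twisted system, so a one-shot comparison fails, and one must instead propagate the desired inequality stepwise along the Picard sequence, starting from the base case $(u_0,v_0)=(0,0)$ which satisfies the bound trivially.
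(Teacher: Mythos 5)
Your proof is correct, and while part (1) is essentially the paper's argument (you run the pointwise maximum principle at an interior maximum where the paper tests against $(u-v)_-$, and the second comparison $-\Delta(\tfrac{\lambda p}{\gamma q}v-u)\ge 0$ is identical), your part (2) takes a genuinely different route. The paper does not iterate: it writes both equations of $(H)$ as $-\Delta u=\lambda q K(x)(u+1)$, $-\Delta v=\gamma p K(x)(v+1)$ with $K=(u+1)^{p-1}(v+1)^{q-1}$, observes that $L(w):=-\Delta w-\gamma p K w$ applied to $w=u-v$ has nonnegative right-hand side when $\lambda q\ge\gamma p$, and then invokes the semi-stability of the minimal solution (Theorem A) to produce a positive supersolution of $-\Delta-pq\gamma K$ with $\eta>0$ off $\Upsilon$, which yields a maximum principle for $L$ and hence $u\ge v$ in one shot (parameters on $\Upsilon$ are handled by a limit along the ray); the bound $\gamma p u\ge \lambda q v$ then follows from a second direct comparison exactly as you do in the limit. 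Your argument instead exploits minimality through its construction: the monotone (Picard) iteration from $(0,0)$, whose increasing convergence to the minimal solution is precisely Montenegro's sub/supersolution scheme cited in the paper, and you propagate the stronger inequality $p\gamma u_n\ge q\lambda v_n$ inductively, noting correctly that together with $q\lambda\ge p\gamma$ it forces $u_n\ge v_n$ and hence the right sign in $-\Delta(p\gamma u_{n+1}-q\lambda v_{n+1})$. What each buys: your route avoids stability entirely, and with it the delicate points in the paper (showing $\eta>0$ for $(\lambda,\gamma)\notin\Upsilon$ and the separate limiting argument on $\Upsilon$), at the cost of leaning on the (standard, but here unproved) fact that the iteration converges increasingly to the minimal solution — if you use this, add the two-line justification that the iterates are increasing, dominated by the minimal solution since the nonlinearities are increasing in each variable, and that the monotone limit is a solution which minimality forces to be $(u,v)$. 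You are also right, and it is worth saying explicitly as you do, that the one-shot subtraction that works for $(G)$ produces an indeterminate sign for $(H)$, which is why both the paper and you must use minimality in some form.
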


\noindent\textbf{Proof:} (1) Subtracting two equations of $(G)_{\lambda,\gamma}$ we get
\begin{eqnarray*}
-\Delta (u-v)  &=& (1+u)^{p-1}(1+v)^{q-1}(\lambda p (1+v) - \gamma q (1+u))\\&\ge& \gamma q  (1+u)^{p-1}(1+v)^{q-1} (v-u)
\end{eqnarray*}
multiply both sides of the above with $(u-v)_-$ to get $ \int |\nabla (u-v)_{-}|^2 \le 0 $
and therefore $v \le u$.  Now, multiply the second equation of $(G)_{\lambda,\gamma}$  with $\frac{\lambda p}{\gamma q}$ and again subtract two equations to get
\begin{eqnarray*}
-\Delta (u-  \frac{\lambda p}{\gamma q} v)  = \lambda p (1+u)^{p-1}(1+v)^{q-1}(v - u) \le 0
\end{eqnarray*}
From maximum principle we get $u \le \frac{\lambda p}{\gamma q} v$.
\\
(2) Set $K(x):= (u+1)^{p-1} (v+1)^{q-1}$.    First note that
\[ L(u-v):=-\Delta (u-v) -\gamma p K(x) (u-v) =K(x) ( (\lambda q - \gamma p) u + \lambda q - \gamma p ),\]  and note that the right hand side is nonnegative.  If we can show that $L$ satisfies the maximum principle then we'd have $ u-v \ge 0$.

We now assume that  $ (u,v)$ is the smooth  minimal solution of $(H)_{\lambda,\gamma}$ and additional we assume that $ (\lambda,\gamma) \in \mathcal{U} \backslash \Upsilon$.  By  Theorem A there   is some $ \eta  \ge 0 $ and $ \psi >0$ such that
\begin{equation} \label{pzpz}
 -\Delta \psi - q p \gamma K(x) \psi \ge \eta \psi.
\end{equation}    Since $(\lambda,\gamma) \notin \Upsilon$ one can infact show that $ \eta >0$. Hence  the linear operator on the left satisfies the maximum principle.  Since $ q >1$ we see that $L$ must also satisfy the maximum principle and hence $ u \ge v$.    In the case where $ (\lambda,\gamma) \in \mathcal{U} \cap \Upsilon$ we pass to the limit along the fixed parameter ray through $(0,0)$ and $ (\lambda,\gamma)$ and use the above result.    Hence we have shown that $ u \ge v$.
  Now set $ t:= \frac{\lambda q}{\gamma p}$ and then note that
\[ -\Delta (u-tv) = K(x) \lambda q ( (u+1)-(v+1)) \ge 0\] and hence $ u \ge t v$.

\hfill $ \Box$

Before we prove Theorem \ref{grade} we need a general energy estimate.
\begin{lemma}\label{stabpol}
 Let $ f(u)=(u+1)^p$ and $  g(v)=(v+1)^q$.  Suppose that $(u,v)$ is a semi-stable solution of $(G)_{\lambda,\gamma}$ with  $ s,t \in \IR \backslash \{ \frac{1}{2} \}$.  Then
 \begin{eqnarray*}
&&  p\left(  p-1-\frac{t^2}{2t-1}  \right) \int (1+u)^{2t+p-2} (1+v)^q +q \left(  q-1-\frac{s^2}{2s-1}  \right) \int (1+v)^{2s+q-2} (1+u)^p\\&&+2pq \int (1+u)^{t+p-1} (1+v)^{s+q-1} +p(p-1)\int (1+u)^{p-2}(1+v)^{q} + q(q-1)\int (1+u)^p(1+v)^{q-2}
\\&&+
p \frac{t^2}{2t-1} \int (1+u)^{p-1} (1+v)^q +q \frac{s^2}{2s-1} \int (1+u)^{p} (1+v)^{q-1}
\\&&
\le  2p(p-1)\int (1+u)^{t+p-2} (1+v)^q + 2pq \int (1+u)^{t+p-1} (1+v)^{q-1} \\&&+ 2q(q-1)\int (1+u)^{p} (1+v)^{s+q-2}+  2pq\int (1+u)^{p-1} (1+v)^{s+q-1}
  \end{eqnarray*}
 \end{lemma}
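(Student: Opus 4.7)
The strategy is to apply the stability inequality (\ref{gra}) with the tailored test functions $\phi := (1+u)^t - 1$ and $\psi := (1+v)^s - 1$. Both lie in $H_0^1(\Omega)$ since $(u,v)$ is smooth with $u = v = 0$ on $\partial\Omega$. The derivatives $f''(u) = p(p-1)(1+u)^{p-2}$ and $g''(v) = q(q-1)(1+v)^{q-2}$ are monomials, and the binomial-type expansions
\[
\phi^2 = (1+u)^{2t} - 2(1+u)^t + 1, \qquad \phi\psi = (1+u)^t(1+v)^s - (1+u)^t - (1+v)^s + 1,
\]
together with the analogous identity for $\psi^2$, turn the left-hand side of (\ref{gra}) into a sum of integrals of products of powers of $(1+u)$ and $(1+v)$ with explicit rational coefficients in $p,q,t,s$.

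The right-hand side of (\ref{gra}) involves $|\nabla\phi|^2 = t^2(1+u)^{2t-2}|\nabla u|^2$ (and the analogous quantity for $\psi$), which I would convert to polynomial integrals using the PDE. Introduce $\Phi(u) := \frac{(1+u)^{2t-1} - 1}{2t-1}$ — well defined precisely because $t \neq 1/2$ — so that $\Phi'(u) = (1+u)^{2t-2}$ and $\Phi(u) \in H_0^1(\Omega)$. Green's identity then gives
\[
\int (1+u)^{2t-2}|\nabla u|^2 = \int \nabla \Phi(u)\cdot\nabla u = -\int \Phi(u)\Delta u = \frac{\lambda p}{2t-1}\int \bigl[(1+u)^{p+2t-2} - (1+u)^{p-1}\bigr](1+v)^q,
\]
where the last equality uses the first equation of $(G)_{\lambda,\gamma}$. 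The factor of $\lambda$ cancels with the $1/\lambda$ on the right-hand side of (\ref{gra}), producing the coefficient $\frac{pt^2}{2t-1}$ (and symmetrically $\frac{qs^2}{2s-1}$) appearing in the statement. An analogous computation disposes of the $\psi$ term.

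Putting everything together and rearranging, the integrals $\int (1+u)^{p+2t-2}(1+v)^q$ and $\int (1+u)^p(1+v)^{q+2s-2}$ appear on both sides of the inequality; combining them yields the coefficients $p\bigl(p-1-\tfrac{t^2}{2t-1}\bigr)$ and $q\bigl(q-1-\tfrac{s^2}{2s-1}\bigr)$ displayed in the statement. Moving all integrals with negative signs to the right-hand side, and dropping the harmless nonnegative term $2pq\int (1+u)^{p-1}(1+v)^{q-1}$ from the left (which only weakens the inequality), reproduces the claimed bound. The main obstacle is purely bookkeeping: tracking the signs and coefficients of the nine integrals produced by the expansion and matching them against the PDE-generated contributions from integration by parts. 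No conceptual difficulty arises beyond these routine manipulations, so long as $s,t \ne 1/2$ so that both $\Phi$ and its $v$-analogue are well defined.
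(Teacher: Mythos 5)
Your proposal is correct and follows essentially the same route as the paper: the same test functions $\phi=(1+u)^t-1$, $\psi=(1+v)^s-1$ in the stability inequality (\ref{gra}), and the same conversion of the gradient terms via the PDE (your $\Phi(u)$ is just $\frac{1}{2t-1}\bigl[(1+u)^{2t-1}-1\bigr]$, a constant multiple of the multiplier used in the paper), followed by the same rearrangement, with the nonnegative term $2pq\int(1+u)^{p-1}(1+v)^{q-1}$ correctly discarded from the left.
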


\begin{proof}  This is an application of Lemma \ref{stabb}. Take $\phi:=(1+u)^{t}-1$ and $\psi:=(1+v)^s-1$ in (\ref{gra}), then we have
\begin{eqnarray}\label{ineq}
\nonumber && p(p-1) \int (1+u)^{p-2} (1+v)^q \left( (1+u)^t-1 \right)^2 + q(q-1) \int (1+v)^{q-2} (1+u)^p \left( (1+v)^s-1 \right)^2\\\nonumber && + 2 pq\int (1+u)^{p-1} (1+v)^{q-1} \left( (1+u)^t-1 \right) \left( (1+v)^s-1 \right) \\ & &\le \frac{t^2}{\lambda} \int |\nabla u|^2 (1+u)^{2t-2} + \frac{s^2}{\gamma} \int |\nabla v|^2  (1+v)^{2s-2}
\end{eqnarray}
Multiply the first and the second equation of $(G)_{\lambda,\gamma}$ with $(1+u)^{2t-1}-1$ and $(1+v)^{2s-1}-1$, respectively, to get
$$(2t-1)\int |\nabla u|^2(1+u)^{2t-2}=\lambda p \int  (1+u)^{2t+p-2}(1+v)^q-\lambda p \int  (1+u)^{p-1}(1+v)^q $$ and
$$(2s-1) \int  |\nabla v|^2(1+v)^{2s-2}=\gamma q \int (1+v)^{2s+q-2}(1+u)^p-\gamma q \int (1+v)^{q-1}(1+u)^p. $$
 Using these identities and (\ref{ineq}) finishes the proof.

\end{proof}

\noindent\textbf{Proof of Theorem \ref{grade}.}   (1) Let $ (\lambda^*,\gamma^*) \in \Upsilon$ and let $(u,v)$ denote a smooth minimal solution on the ray $ \Gamma_\sigma$ where $ \sigma:= \frac{\gamma^*}{\lambda^*}$.

  Let $ 1 < t<t_+(p-1)$ and $ 1<s<t_+(q-1)$ in Lemma \ref{stabpol} to arrive at an inequality of the form
\[ \int (u+1)^{2t+p-2} (v+1)^q + \int (u+1)^p (v+1)^{2s+q-2} \le C_{t,s}.\] First note that
\[ \int | \nabla u|^2  \le p \lambda^* \int (u+1)^p (v+1)^q \le C_{t,s},\] provided
$ p < 2t_+(p-1) +p-2$ but this holds since $ p>1$ and by passing to the limit we see that $ u^* \in H_0^1(\Omega)$.   We similarly show that $ v^* \in H_0^1(\Omega)$.

 Without loss of generality assume that $ p \ge q$ and hence $ t_+(q-1) \le t_+(p-1)$ and so we have
\[ \int (u+1)^{2t+p+q-2} \le C_{t},\] for all $ 1 < t < t_+(p-1)$.   We now re-write the equation as
\[ \frac{ -\Delta u^*}{\lambda^*} = c(x) u^* + p (v^*+1)^q, \] where
\[ 0 \le c(x)= p \frac{  \left( (u^*+1)^{p-1}-1 \right)}{u^*} (v^*+1)^q \le C (u^*+1)^{p+q-2}.\]  We now apply regularity theory to see that $u^*$ is bounded provided $ c(x), (v^*+1)^q \in L^T$ for some $ T>\frac{N}{2}$.  But this holds provided
\[  (p+q-2) \frac{N}{2} < 2t_+(p-1) +p+q-2,\] which is the desired result.  To see $v^*$ is bounded we now use the pointwise comparison between $u$ and $v$ and pass to the limit along the ray $ \Gamma_\sigma$.  \\

(2) In (1) we only used the first two integrals from Lemma \ref{stabpol} to obtain estimates.  In this part we also use the third integral.
Let $ (\lambda^*,\gamma^*) \in \mathcal{U} $ and let $ (u,v)$ denote the a minimal solution on the ray $ \Gamma_\sigma$, where $ \sigma:=\frac{\gamma^*}{\lambda^*}$.    The exact proof depends on the sign of $ \lambda^* p - \gamma^* q$ and we suppose that $ \lambda^* p \ge \gamma^* q$.    Let $ t_0 <t < T$ and so $ I_{p,q,\lambda^*,\gamma^*}(t), I_{q,p,\lambda^*,\gamma^*}(t)>0$ and $ p-1-\frac{t^2}{2t-1}, q-1- \frac{t^2}{2t-1}  <0$.   We now set $ s=t$ and examine the estimate from Lemma \ref{stabpol}.  Note the coefficients in front of the first two integrals are negative and the coefficient in front of the third integral is positive.  The other integrals on the left are lower order terms which we drop.     Now note that $ u \ge v$ and so we can replace, since the coefficients are negative,  the $u$'s in the first two integrals from the estimate in Lemma \ref{stabpol} with $ v$'s.
  In the third integral we use the fact that $ \frac{\gamma q}{\lambda p} (u+1) \le v+1$. Writing this all out and then again using the fact that we can compare $u$ and $v$, one can see that the following is a lower bound for the left-hand side of the integral estimate given by Lemma \ref{stabpol},
 \begin{eqnarray*}
   \left(p\left(p-1-\frac{t^2}{2t-1}\right)  + q\left(q-1- \frac{t^2}{2t-1}\right) +2pq  \left( \frac{\gamma q}{\lambda p}\right)^{t+q-1}\right) \int (1+u)^{2t+p+q-2} \\+
(p+q)\left( p+q-1-\frac{t^2}{2t-1} +\frac{2pq}{p+q} \left(  \left( \frac{\gamma q}{\lambda p}\right)^{t+q-1}-1 \right) \right)\int (1+u)^{2t+p+q-2}.
 \end{eqnarray*}
Combining everything gives an estimate of the form
 \[ I_{p,q, \lambda^*,\gamma^*}(t)  \int (1+u)^{2t+p+q-2}   \le C_{p,q,\lambda^*,\gamma^*} \int (1+u)^{t+p+q-2}.\]
Since $ I_{p,q,\lambda^*,\gamma^*}(t)>0$ we have an estimate.   We now proceed exactly as in the first part. We rewrite the equation in the alternate form and we then require that
 \[ (p+q-2) \frac{N}{2} < 2t +p +q-2,\] for some $ t_0< t $ where $I_{p,q, \lambda^*,\gamma^*}(t) >0$.

\hfill $ \Box$

\begin{lemma} \label{shit} Let $ (\lambda^*,\gamma^*) \in \Upsilon$ and let $ (u,v)$ denote a minimal solution of $(H)_{\lambda,\gamma}$ on the ray $ \Gamma_\sigma$ where $ \sigma = \frac{\gamma^*}{\lambda^*}$.  Then for $ 1 <t<t_+(p)$ and $ 1 < \tau < t_+(q)$ we have
\begin{eqnarray}
 \int (u+1)^{2t+p-1} (v+1)^{q-1} &\le& C, \\
 \int (u+1)^{p-1} (v+1)^{2 \tau + q-1} &\le& C,
\end{eqnarray}
where $ C$ is uniform on the ray $ \Gamma_\sigma$. These two inequalities and an application of  the Cauchy-Schwarz inequality gives
 \[ \int (u+1)^{p+t-1} (v+1)^{q+\tau-1} \le C.\]
\end{lemma}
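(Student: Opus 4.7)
The first two inequalities are symmetric in the pairs $(u,p,\lambda,t)\leftrightarrow(v,q,\gamma,\tau)$, so I would focus on the first; the second follows by interchanging the roles of the two variables.  My strategy is to apply the stability inequality (\ref{twist}) with the decoupled choice $(\phi,\psi)=((u+1)^{t}-1,0)\in H_0^1(\Omega)\times H_0^1(\Omega)$ for $1<t<t_+(p)$, which reduces (\ref{twist}) to
\[pq\int(u+1)^{p-1}(v+1)^{q-1}\phi^{2}\le\frac{1}{\lambda}\int|\nabla\phi|^{2}.\]
To evaluate the right-hand side I would test the $u$-equation $-\Delta u=\lambda q(u+1)^{p}(v+1)^{q-1}$ against the admissible multiplier $(u+1)^{2t-1}-1$ and integrate by parts, yielding $\int|\nabla\phi|^{2}=\tfrac{\lambda q t^{2}}{2t-1}[A-D]$, with $A:=\int(u+1)^{2t+p-1}(v+1)^{q-1}$ and $D:=\int(u+1)^{p}(v+1)^{q-1}$.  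After expanding $\phi^{2}=[(u+1)^{t}-1]^{2}$ and collecting terms, the stability inequality becomes
\[q\left(p-\tfrac{t^{2}}{2t-1}\right)A+pq\,B+\tfrac{qt^{2}}{2t-1}D\le 2pq\,C,\]
where $B:=\int(u+1)^{p-1}(v+1)^{q-1}$ and $C:=\int(u+1)^{t+p-1}(v+1)^{q-1}$.  The coefficient $p-t^{2}/(2t-1)$ is strictly positive precisely on $(t_{-}(p),t_{+}(p))$, which contains the whole range $1<t<t_{+}(p)$.

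The main step is absorbing the cross term $C$.  By H\"older's inequality with exponents $r:=(2t+p-1)/(t+p-1)$ and $r':=(2t+p-1)/t$ one has $C\le A^{1/r}\bigl[\int(v+1)^{q-1}\bigr]^{1/r'}$; substituting and rearranging yields an estimate of the form $A\le C_{t}\int(v+1)^{q-1}$ along the ray $\Gamma_{\sigma}$.  The main obstacle will be controlling $\int(v+1)^{q-1}$ uniformly on $\Gamma_{\sigma}$.  To this end I would exploit the pointwise comparison of Lemma \ref{pointwise}(2): since $\sigma=\gamma^{*}/\lambda^{*}$ is fixed on the ray, exactly one of $q\lambda\ge p\gamma$ or $p\gamma\ge q\lambda$ holds throughout, so one of the pointwise bounds $v\le Cu$ or $u\le Cv$ applies with constant depending only on $(\lambda^{*},\gamma^{*},p,q)$.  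Combined with the uniform $L^{1}$ bounds from Remark \ref{rem1} and standard elliptic regularity, this allows a bootstrap: apply the $u$- and $v$-versions of the inequality above alternately with $t,\tau$ close to $1$, progressively enlarging the integrability of $u$ and $v$ until $\int(v+1)^{q-1}$ and (by symmetry) $\int(u+1)^{p-1}$ are bounded uniformly on $\Gamma_\sigma$.  The second inequality of the lemma then follows by the fully symmetric argument applied with $(\phi,\psi)=(0,(v+1)^{\tau}-1)$.

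The third inequality is immediate from the Cauchy-Schwarz inequality: writing
\[(u+1)^{p+t-1}(v+1)^{q+\tau-1}=\sqrt{(u+1)^{2t+p-1}(v+1)^{q-1}}\cdot\sqrt{(u+1)^{p-1}(v+1)^{2\tau+q-1}}\]
and integrating gives
\[\int(u+1)^{p+t-1}(v+1)^{q+\tau-1}\le\Bigl[\int(u+1)^{2t+p-1}(v+1)^{q-1}\Bigr]^{1/2}\Bigl[\int(u+1)^{p-1}(v+1)^{2\tau+q-1}\Bigr]^{1/2},\]
and the two factors on the right are bounded by the first two parts just established.
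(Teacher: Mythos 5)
Your reduction of (\ref{twist}) with the decoupled choice $(\phi,\psi)=((u+1)^t-1,0)$ is algebraically correct, and it does yield $q\bigl(p-\tfrac{t^2}{2t-1}\bigr)\int(u+1)^{2t+p-1}(v+1)^{q-1}\le 2pq\int(u+1)^{t+p-1}(v+1)^{q-1}$ plus positive terms, hence via H\"older the bound $\int(u+1)^{2t+p-1}(v+1)^{q-1}\le C_t\int(v+1)^{q-1}$. But the step where you claim to control $\int(v+1)^{q-1}$ (and symmetrically $\int(u+1)^{p-1}$) uniformly on $\Gamma_\sigma$ is a genuine gap, and it is exactly the hard part: these quantities are essentially of the same strength as the conclusion of the lemma. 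Your proposed bootstrap is circular: the $u$-version of your inequality takes $\int(v+1)^{q-1}$ as input and the $v$-version takes $\int(u+1)^{p-1}$ as input, and taking $t,\tau$ close to $1$ does not weaken the required input, since the right-hand side is $\int(v+1)^{q-1}$ for every admissible $t$; so there is no quantity that improves from one step to the next. The pointwise comparison of Lemma \ref{pointwise}(2) is one-sided (it gives $v\le Cu$, say, not a two-sided equivalence as for $(G)_{\lambda,\gamma}$), so it converts the needed bound into $\int(u+1)^{q-1}$, which is again unknown. Remark \ref{rem1} only gives $L^1$ bounds and weighted bounds of the type $\int(v+1)^{q}\delta\le C$; these yield $\int(v+1)^{q-1}\le C$ only when $q<2$ or in the interior, and the boundary estimates via moving planes require convexity, which neither Lemma \ref{shit} nor Theorem \ref{hamil} assumes. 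With arbitrary $p,q>1$ and arbitrary $N$, elliptic regularity from an $L^1$ (or $L^1_\delta$) right-hand side cannot produce $(u+1)^{p-1},(v+1)^{q-1}\in L^1$ either. So as written the argument does not close.

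The underlying reason the decoupled test function fails is that by setting $\psi=0$ you discard the positive cross term $2\int\sqrt{ff''gg''}\,\phi\psi$ of (\ref{twist}), and in your inequality every left-hand term carries the fixed factor $(v+1)^{q-1}$, so the leftover from Young's inequality, $\int(v+1)^{q-1}$, can never be absorbed. The paper's proof takes $\phi=(u+1)^t-1$ and $\psi=(v+1)^\tau-1$ simultaneously, so the left-hand side contains, besides the two terms you produce, the cross term $2\sqrt{p(p-1)q(q-1)}\int(u+1)^{p+t-1}(v+1)^{q+\tau-1}$ with a positive sign (here $\phi,\psi\ge 0$ is used). Since $u,v\ge 0$, every right-hand "lower order" term such as $\int(u+1)^{p+t-1}(v+1)^{q-1}$ and $\int(u+1)^{p-1}(v+1)^{q+\tau-1}$ can then be absorbed by Young's inequality into $\epsilon$ times the left-hand terms plus $C_\epsilon|\Omega|$, because the cross term grows in both variables; no a priori integrability of $u$ or $v$ is needed, no convexity, and no iteration. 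Your final Cauchy--Schwarz step for the third inequality is correct and is the same as the paper's, but the first two inequalities need the coupled test functions (or some other genuinely new input) to be proved in the stated generality.
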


\begin{proof} Set $ \phi:=(u+1)^t-1$ and $ \psi:=(v+1)^\tau-1$ and put these into the stability inequality given by (\ref{twist}) to arrive at   an inequality of the form
\begin{eqnarray*}
&& q ( p - \frac{t^2}{2t-1}) \int (u+1)^{2t+p-1} (v+1)^{q-1} +  p( q - \frac{\tau^2}{2 \tau-1}) \int (u+1)^{p-1}(v+1)^{q+2 \tau-1}
 \\&& + 2 \sqrt{p(p-1)q (q-1)} \int(u+1)^{p+t-1} (v+1)^{q+\tau-1}
 \le    C(p,q) \int (u+1)^{p+t-1} (v+1)^{q-1}  \\
 && + C(p,q) \int (u+1)^{p-1} (v+1)^{q+\tau-1}.
 \end{eqnarray*}  Note that for the given choices of $ t, \tau$ the coefficients on the left are positive.   One now easily sees that the terms on the right are lower order terms and hence we obtain the desired estimates after some standard calculations.

\end{proof}

\noindent\textbf{Proof of Theorem \ref{hamil}.}  Without loss of generality we suppose that $ \lambda^* q \ge \gamma^* p$.   Let $ (u,v)$ denote a minimal solution on the ray $ \Gamma_\sigma$ where $ \sigma:= \frac{\gamma^*}{\lambda^*}$.   Note that we have
$ u \ge  \frac{ \lambda^* q}{\gamma^* p} v >v$.   We first show that $ u^* \in H_0^1$.  First note that
\[ \int | \nabla u|^2 = \lambda q \int (u+1)^p u (v+1)^{q-1}, \]
along the ray $ \Gamma_\sigma$ and the right hand side is uniformly bounded   provided
\[ p+1 < p-1 + 2 t_+(p),\] which is the case, for any $ p>1$ and dimension $N$,  after considering the estimates from Lemma \ref{shit}.
We now rewrite the equation for $u^*$ as
\[ -\Delta u^* = \lambda q \left( \frac{ (u^*+1)^{p}-1}{u^*} \right) (v^*+1)^{q-1} u^* + \lambda q (v^*+1)^{q-1},\] and to show $u^*$ is bounded it is sufficient to show that
$ (u^*+1)^{p-1} (v^*+1)^{q-1} \in L^r$ for some $ r > \frac{N}{2}$.   Using Lemma \ref{shit} one sees this is the case   provided
\[ \frac{N}{2} (p-1) < p-1 + t_+(p), \qquad \frac{N}{2}(q-1) < q -1 + t_+(q).\]   So we have shown that $u^*$ is bounded and we now use the fact that $u^* \ge v^*$ to see the same for $ v^*$.

\hfill $ \Box$

\end{document}